\newtheorem{theorem}{Theorem}
\newtheorem{lemma}{Lemma}
\newtheorem{proposition}{Proposition}
\newtheorem{example}{Example}
\newtheorem{remark}{Remark}
\newtheorem*{openproblem}{Open Problem}
\newcommand{\trace}{\mathrm{tr}} 
\newcommand{\im}{\mathrm{im}} 
\newcommand{\rank}{\mathrm{rank}} 
\newcommand{\NN}{\mathbb{N}} 
\newcommand{\ZZ}{\mathbb{Z}} 
\newcommand{\RR}{\mathbb{R}} 
\newcommand{\CC}{\mathbb{C}} 
\newcommand{\QQ}{\mathbb{Q}} 
\newcommand{\yy}{{\sf y}}
\newcommand{\osty}{\displaystyle}
\newcommand*\abs[1]{\lvert#1\rvert} 
\title{Unlabeled Sensing Using Rank-One Moment Matrix Completion}
\author{Hao Liang}
\address{KLMM, Academy of Mathematics and Systems Science, Chinese Academy of Sciences, Beijing 100190, China}
\address{University of Chinese Academy of Sciences}
\email{lianghao2020@amss.ac.cn}
\author{Jingyu Lu}
\address{KLMM, Academy of Mathematics and Systems Science, Chinese Academy of Sciences, Beijing 100190, China}
\address{University of Chinese Academy of Sciences}
\email{lujingyu@amss.ac.cn}
\author{Manolis C. Tsakiris}
\address{KLMM, Academy of Mathematics and Systems Science, Chinese Academy of Sciences, Beijing 100190, China}
\address{University of Chinese Academy of Sciences}
\email{manolis@amss.ac.cn}
\author{Lihong Zhi}
\address{KLMM, Academy of Mathematics and Systems Science, Chinese Academy of Sciences, Beijing 100190, China}
\address{University of Chinese Academy of Sciences}
\email{lzhi@mmrc.iss.ac.cn}
\keywords{unlabeled sensing,
birational morphism,
matrix completion,
determinantal variety,
semidefinite relaxation
}
\begin{document}

\begin{abstract}

We study the unlabeled sensing problem that aims to solve a linear system of equations $A x =\pi(y) $ for an unknown permutation $\pi$.   For a generic matrix $A$ and a generic vector $y$, we construct a system of polynomial equations whose unique solution satisfies $ A\xi^*=\pi(y)$.   In particular,  $\xi^*$ can be recovered by solving the rank-one moment matrix completion problem. We propose symbolic and numeric algorithms to compute the unique solution. Some numerical experiments are conducted to show the efficiency and robustness of the proposed algorithms.

\end{abstract}
\maketitle
\tableofcontents
\newpage

\section{Introduction}\label{sec:introduction}

Given a full-rank matrix $A^*\in\RR^{m\times n}$ and a vector $y^*\in\RR^s$ satisfying $m\geqslant s>n>0$, the {\itshape unlabeled sensing problem} \cite{Unnikrishnan&Haghighatshoar&Vetterli2015, Unnikrishnan&Haghighatshoar&Vetterli2018} 
	asks that for an unknown vector $\xi^*\in\RR^n$, if one only knows the vector $y^*\in\RR^s$ consisting of $s$ shuffled entries of $A^*\xi^*$, whether the vector $\xi^*$ is unique and how to recover it efficiently. This problem emerges from various fields of natural science and engineering, such as biology \cite{Huang&Madan1999, Rose&Mian2014, Abid&Zou2018, Ma&Cai&Li2020}, neuroscience \cite{Nejatbakhsh&Varol2021}, computer vision \cite{David&DeMenthon&Duraiswami&Samet2004, Marques&Stosic&Costeira2009, Tsakiris&Peng2019, Ji&Liang&Xu&Zhang2019, Li&Fujiwara&Okura&Matsushita2023} and communication networks \cite{Narayanan&Shmatikov2008, Keller&JafariSiavoshani&Fragouli&Argyraki&Diggavi2009, Song&Choi&Shi2018}. 

Theorem 1 in \cite{Unnikrishnan&Haghighatshoar&Vetterli2015} asserts that the solution of the unlabeled sensing problem is indeed unique if $s\geqslant2n$ and $A^*$ is generic. For the case $m=s$, 
Song, Choi and Shi \cite{Song&Choi&Shi2018} proposed a new method for recovering the vector $\xi^*$  as follows: let  
	\[ q_i(x)=p_i(A^* x)-p_i(y^*),\]
	where  $x=[x_1,\dots,x_n]$ and  $$p_k(y)=\sum_{i=1}^{m}{y_i^k}\in\RR[y]$$
	is the $k$-th power sum of the variables $y=[y_1,\dots,y_m]$. 
	As $p_k$ is a symmetric polynomial in $y$, its value is independent of the order of the variables $y_1, \ldots, y_m$. Suppose $\pi$ is {\color{black} an element} of the permutation group $\Sigma_{m}$, if $\xi^*$ is a solution of
\begin{equation}\label{problem}
A^* x = \pi(y^*), 
\end{equation}	 then it is a root of the polynomials $q_i(x)$, i.e., 
	\[q_i(\xi^*)=p_i(A^* \xi^*)-p_i(y^*)=p_i(\pi(y^*))-p_i(y^*)=0 \]
	for $i=1, \ldots, m$.

 By Theorem 1 in \cite{Song&Choi&Shi2018} {\color{black} and by \cite{Unnikrishnan&Haghighatshoar&Vetterli2015, Unnikrishnan&Haghighatshoar&Vetterli2018}}, with   $A^*\in\RR^{m\times n}$ a given matrix with i.i.d random entries drawn from an arbitrary continuous probability distribution over $\RR$, if $m \geqslant 2n$, then with probability $1$, $\xi^*$ is the unique solution of the polynomial system 
	\begin{equation}\label{eqn:m}
		Q_m=\{q_1(x)=0, \ldots,~q_m(x)=0\}.
	\end{equation}
Numerical experiments showed that solving the first $n+1$ equations is sufficient for recovering the solution $\xi^*$; i.e. $\xi^*$ is the unique solution of $Q_{n+1}$. Hence, \cite{Song&Choi&Shi2018} pointed out the following open problem {\color{blue}(see also Conjecture 6 in \cite{melanova2022recovery})}:
\begin{openproblem}\label{openproblem}
For a generic matrix $A^*\in\RR^{m\times n}$, $m \geqslant n+1$, and a permutation $y^*$ of a generic vector in the column space of $A^*$, the $(n+1)$-by-$n$ polynomial system
		\begin{equation}\label{eqn:3}
			Q_{n+1}=\{q_1(x)=0, \ldots,~q_n(x)=0,~q_{n+1}(x)=0\}
		\end{equation}
		has a unique solution $\xi^*$, which satisfies $ A^* \xi^* = \pi(y^*)$.
\end{openproblem}

In \cite{Tsakiris2020}, it is shown that for a generic matrix $A^*\in\CC^{m\times n}$ and any vector $y^*$, the $n$-by-$n$ polynomial system
	\begin{equation}\label{eqn:2}
		Q_n=\{q_1(x)=0, \ldots,~q_n(x)=0\}
	\end{equation}
	in the variables $x=x_1,\dots,x_n$ has at most $n!$ solutions. Furthermore, if $y^*$ is a permutation of a generic vector in the column space of $A^*$, then among the solutions of $Q_n$ there is only one vector $\xi^*$ satisfying $A^* \xi^* = \pi(y^*)$. They can recover $\xi^*$ by solving $Q_n$  via symbolic or homotopy methods and then select $\xi^*$ from $n!$ solutions via numerical optimization methods.

\begin{example}\label{running_example_1}
    Given a matrix $A^*$ and a vector $y^*$
    \[A^*:= \left[ \begin {array}{cc} 1&2\\ \noalign{\medskip}4&3
\\ \noalign{\medskip}0&-2\\ \noalign{\medskip}-2&0\end {array}
 \right], ~~ y^*= \left[ \begin {array}{c} -5\\ \noalign{\medskip}-10
\\ \noalign{\medskip}2\\ \noalign{\medskip}4\end {array} \right],\]
find a solution $\xi^*$ such that 
\[A^* \xi^*=\pi(y^*)\]
for an unknown permutation $\pi$ of the coordinates of $y^*$.

From the matrix $A^*$ and the vector   $y^*$, we compute the polynomials 
\begin{eqnarray*}
q_1(x) &=& 3\,x_{{1}}+3\,x_{{2}}+9,\\
q_2(x)&=& 21\,{x_{{1}}}^{2}+28\,x_{{1}}x_{{2}}+17\,{x_{
{2}}}^{2}-145,\\
q_3(x) &=& 57\,{x_{{1}}}^{3}+150\,{x_{{1}}}^{2}x_{{2}}+120\,x_{{1}}
{x_{{2}}}^{2}+27\,{x_{{2}}}^{3}+1053,\\
q_4(x) &=& 16\,{x_{{1}}}^{3}x_{{2}} + 44\,{x_{{1}}}^{2}{x_{{2}}}^{2} + 24\,x_{{1}}{x_{{2}}}^{3}-400.
\end{eqnarray*}

The polynomial system $\{q_1(x)=0,q_2(x)=0,q_3(x)=0, q_4(x)=0\}$
has a unique  solution 
$\xi^*=(-1,-2).$
One can check that $\xi^*$ is the solution of (\ref{problem}) for the permutation $\pi= [1, 2, 4, 3]$. 

We can check that $\xi^*$  is the unique solution of the polynomial system $\{q_1(x)=0,q_2(x)=0,q_3(x)=0\}$, which gives an example supporting the positive answer to the open problem (\ref{openproblem}). 

The polynomial system $\{q_1(x)=0, q_2(x)=0\}$
has two solutions  
$\eta^*=\left(-{\frac{4}{5}},-{\frac{11}{5}}\right), ~~
\xi^*= (-1,-2).$
One can check that  $\eta^*$ is not a solution of (\ref{problem}). 
\end{example}


{ \color{black}
 

   The following theorem shows that one can obtain the unique and desired vector $\xi^*$ for a generic matrix $A^*$ and a permutation $y^*$ of a generic vector in the column space of $A^*$ by solving the polynomial system $Q_{n+1}$. 
  
	\begin{theorem}\label{thm:1}
		The map
		\begin{align}
			f:\CC^{m\times n}\times\CC^n&\rightarrow
   \CC^{m\times n}\times\CC^{n+1}\label{birational:1}\\
			(A^*,\xi^*)&\mapsto(A^*,p_1(A^*\xi^*),\dots, p_{n+1}(A^*\xi^*))\notag
		\end{align}
		is a birational morphism to the image $f(\CC^{m\times n}\times\CC^n)\subseteq\CC^{m\times n}\times\CC^{n+1}$.
  
        \end{theorem}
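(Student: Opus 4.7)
The plan is to reduce the statement to a fixed generic matrix and then control the fiber product by a Jacobian rank computation, leveraging the degree bound from \cite{Tsakiris2020}.

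Since $f$ restricts to the identity on the $\CC^{m\times n}$ factor, it suffices to prove that for a generic $A^*\in\CC^{m\times n}$ the map
\[
\psi_{A^*}:\CC^n\to\CC^{n+1},\qquad \xi\mapsto(p_1(A^*\xi),\ldots,p_{n+1}(A^*\xi))
\]
is birational onto its image. By \cite{Tsakiris2020}, the truncation $\phi_{A^*}(\xi)=(p_1(A^*\xi),\ldots,p_n(A^*\xi))$ is dominant and generically finite of degree at most $n!$; hence $\psi_{A^*}$ is dominant onto an $n$-dimensional subvariety of $\CC^{n+1}$ and generically finite. It remains to show its generic fiber is a single point.

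For this, I would consider the fiber product
\[
W:=\{(\xi,\xi')\in\CC^{2n}\;:\;\psi_{A^*}(\xi)=\psi_{A^*}(\xi')\},
\]
which contains the diagonal $\Delta$ of dimension $n$. Generic injectivity of $\psi_{A^*}$ is equivalent to the closure of $W\setminus\Delta$ having dimension at most $n-1$, so that its image under the second projection misses a generic point of $\CC^n$. To establish this bound, I would analyze the Jacobian of $\Psi(\xi,\xi')=(p_k(A^*\xi)-p_k(A^*\xi'))_{k=1}^{n+1}$. Using the identity $\nabla_\xi p_k(A^*\xi)=k\,(A^*)^T y^{k-1}$ with $y=A^*\xi$ and componentwise exponentiation, a linear dependence among its rows at a point $(\xi,\xi')$ corresponds to a nonzero polynomial $\tilde Q\in\CC[t]_{\le n}$ satisfying $(A^*)^T \tilde Q(y)=0$ and $(A^*)^T \tilde Q(y')=0$, where $\tilde Q$ is applied componentwise to $y,y'\in\CC^m$. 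These amount to $2n\ge n+1$ linear conditions on the $(n+1)$-dimensional space $\CC[t]_{\le n}$, and the plan is to show they have full rank $n+1$ for generic data, forcing $\tilde Q=0$ and yielding the dimension bound via the implicit function theorem.

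The main obstacle is certifying the rank-$(n+1)$ claim rigorously. My strategy is to reduce to the extremal case $m=n+1$, where $\ker (A^*)^T$ is one-dimensional and each system of $n$ conditions on $\tilde Q$ cuts out a pencil in $\CC[t]_{\le n}$; the claim then reduces to verifying that for generic $y\ne y'$ in $\im(A^*)$ the two resulting pencils meet only at $0$, which can be checked by a direct computation (or in the spirit of Example \ref{running_example_1}). Standard upper-semicontinuity of the Jacobian rank, together with the irreducibility of the parameter space $\CC^{m\times n}$, would then extend the conclusion to all generic $A^*$ with $m\ge n+1$ and complete the proof.
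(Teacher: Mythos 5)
Your overall frame---fix a generic $A^*$, reduce birationality of $f$ to generic injectivity of $\psi_{A^*}$ (legitimate in characteristic zero), and bound the dimension of $\overline{W\setminus\Delta}$---is reasonable (note only that generic injectivity is \emph{implied by}, not equivalent to, $\dim\overline{W\setminus\Delta}\leqslant n-1$, since a non-dominating $n$-dimensional component would also be harmless). But the step that carries all the weight is missing. To get the dimension bound from the Jacobian of $\Psi$ you need rank $n+1$ at a generic point of \emph{every component of $W\setminus\Delta$}; such points are not generic pairs $(\xi,\xi')$---they satisfy the $n+1$ equations $p_k(A^*\xi)=p_k(A^*\xi')$---so showing that the $2n$ linear conditions on $\CC[t]_{\leqslant n}$ have full rank ``for generic data'' says nothing about them. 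Worse, the argument is circular: if an $n$-dimensional off-diagonal component existed, the Jacobian would automatically be degenerate along it, and nothing in your computation excludes this. That such components are a genuine threat is shown by the truncated map $(p_1(A^*\xi),\dots,p_n(A^*\xi))$, whose fiber product \emph{does} contain $n$-dimensional off-diagonal components (its degree is $n!$); the entire content of the theorem is that the $(n+1)$st power sum separates these $n!$ sheets, and that cannot be extracted from a pointwise genericity claim about $(y,y')$.

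The reduction to $m=n+1$ does not close the gap either. First, the case $m=n+1$ is not ``a direct computation'': it is the heart of the theorem (Example \ref{running_example_1} is a single numerical instance with $n=2$, not a certificate). The paper proves it by factoring $g$ through the determinantal variety $D_{n+1}(A|y)$: the map $\alpha$ is birational by Cramer's rule together with irreducibility of $D_{n+1}(A|y)$, and $\beta$ is birational because a generic point of $D_{n+1}(A|y)$ lies on no permuted copy $D_{n+1}(A|\sigma(y))$ and Vieta's theorem identifies the fibers of $\beta$ with permutation orbits. Second, ``upper semicontinuity of the Jacobian rank'' over the parameter space $\CC^{m\times n}$ cannot transport the conclusion from a special $A^*$ to a generic one, because the locus $W\setminus\Delta$ at which the rank must be controlled moves with $A^*$ and is exactly the unknown object; semicontinuity compares ranks at a fixed point, not along an a priori unknown family of excess components. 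The paper's passage from $m=n+1$ to general $m\geqslant n+1$ is instead made through the resultant $R(A,z)$ of $\{p_k(At)-z_k\}_{k\leqslant n+1}$: its leading term in $z_{n+1}$ is $c(A)^{n+1}z_{n+1}^{n!}$, it satisfies $R(A,p_1(Ax),\dots,p_{n+1}(Ax))=0$, and it is irreducible over $\CC(A,z_1,\dots,z_n)$ by Gauss's lemma and specialization modulo the rows $A_{>n+1}$, where the already-proved $m=n+1$ case is invoked; combined with $[\CC(A,x):\CC(A,p_1(Ax),\dots,p_n(Ax))]=n!$ this gives birationality of $\gamma$, hence of $f$. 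As written, your proposal assumes the hardest case and offers no mechanism for ruling out the excess components in general.
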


}

 \begin{remark}
    The fact that the morphism (\ref{birational:1}) is birational implies that there exists a dense open subset $U_2$ of $V$ such that $f$ induces a bijection from $U_1:=f^{-1}(U_2)$ to $U_2$, {\color{black} see \cite[Chapter I, Corollary 4.5]{Hartshorne_1977}.}
		Therefore, if the sample point $(A^*, y^*)$ satisfies the condition $(A^*, p_1(y^*),\dots, p_{n+1}(y^*))\in U_2$, then there exists a unique vector $\xi^*$ such that $(A^*,\xi^*) \in U_1$ and $$(A^*,p_1(A^*\xi^*),\dots, p_{n+1}(A^*\xi^*))=(A^*,p_1(y^*),\dots, p_{n+1}(y^*)).$$
		This answers the open problem above in the affirmative.
 \end{remark}

As an alternative to existing RANSAC \cite{Elhami-ICASSP17}, expectation maximization \cite{Abid-Allerton2018}, branch \& bound \cite{Peng-SPL2020} or homotopy and Groebner bases \cite{Tsakiris2020} approaches for solving the unlabeled sensing problem in its generality (that is without special assumptions such as sparsity, as, e.g., considered in \cite{Slawski-JoS19}), in this paper we focus on finding the unique solution of the polynomial system $Q_{n+1}$ efficiently by reducing it to a rank-one moment matrix completion problem, which can then be solved by many efficient algorithms.   

\paragraph*{\textbf{Structure of the paper}} 

In Section \ref{sec:proof}, we briefly review the main idea in the proof of Theorem \ref{thm:1} in \cite{Liang&Lu&Tsakiris&Zhi2023} and its relationship with previous known results. In Section \ref{sec:moment}, we show that solving polynomial system $Q_{n+1}$ can be reduced to solving the rank-one moment matrix completion problem, which can be solved efficiently by solving a sequence of  {\color{black} semidefinite programming (SDP) problems}.  
In Section \ref{sec:experiment}, we propose some symbolic and numerical algorithms based on  Theorem \ref{thm:1} and \ref{cor}. We conducted eight numeric and symbolic experiments to test the efficiency and robustness of the proposed algorithms. 



\section{Proof of Theorem \ref{thm:1}}\label{sec:proof}

{\color{black}

For convenience, we will abbreviate the unlabeled sensing problem for case $m=s>n>0$ by USP and call the data $(A^*,y^*)$ in USP as the sample point of USP.

The morphism  $f$ can be written as the composition of the following two morphisms 
	\begin{align}
					g:\CC^{m\times n}\times\CC^n&\rightarrow \overline{g(\CC^{m\times n}\times\CC^n)}\subseteq\CC^{m\times n}\times\CC^{m}\label{birational:2}\\
					(A^*,\xi^*)&\mapsto(A^*,p_1(A^*\xi^*),\dots, p_{m}(A^*\xi^*))\notag\\
					\text{and}~\gamma:\overline{g(\CC^{m\times n}\times\CC^n)}&\rightarrow\overline{f(\CC^{m\times n}\times\CC^n)}\subseteq\CC^{m\times n}\times\CC^{m}.\label{gamma} \\
					(A^*,p_1(\eta^*),\dots,p_m(\eta^*))&\mapsto(A^*,p_1(\eta^*),\dots,p_{n+1}(\eta^*))\label{gamma_mapping}
				\end{align}
Since the composition of two birational morphisms is still a birational morphism, the proof of Theorem \ref{thm:1} can be divided into two parts: 	
\begin{itemize}
    \item  Show the morphism $g$ is birational, and

	\item Show the morphism $\gamma$ is birational.
\end{itemize}

   When $m=n+1$,  the morphism $g$ equals the morphism $f$ in Theorem \ref{thm:1}. We show first in Section  \ref{2.1} that the morphism $g$ is birational. Hence,  it implies Theorem \ref{thm:1} for the case $m=n+1$, i.e., one can obtain the unique and desired vector $\xi^*$ for a generic matrix $A^*$ and a permutation $y^*$ of a generic vector in the column space of $A^*$ by solving the polynomial system $Q_{n+1}$ for the case $m=n+1$. Moreover, as $g$ is birational, it implies that for generic sample points $(A^*,y^*)$ of the unlabeled sensing problem (USP), the polynomial system $Q_m$ in (\ref{eqn:m}) has a unique solution $x=\xi^*$, which is also the unique solution of USP.

    Since $m$ can be much larger than $n$ in the application,  we need to show that $f$ is birational for a general $m \geqslant n+1$. This implies that the smaller polynomial system $Q_{n+1}$ in (\ref{eqn:3}) has a unique solution for generic $(A^*,y^*)$. The key point of showing that $f$ is birational is to prove 
    \begin{align*}
        &[\CC(A,x):\CC(A,p_1(Ax),\dots,p_{n}(Ax))]=n!,~\text{and}\\
        &[\CC(A,p_1(Ax),\dots,p_{n+1}(Ax)):\CC(A,p_1(Ax),\dots,p_{n}(Ax))]=n!.
    \end{align*}
    The first degree is the B{\'e}zout number of the complete intersection given by the regular sequence $p_1(Ax),\dots,p_{n}(Ax)$, and the second one is exactly the degree of the minimal polynomial of $p_{n+1}(Ax)$ over $\CC(A,p_1(Ax),\dots,p_{n}(Ax))$. In Section \ref{2.2}, we give a sketch proof showing that this polynomial is the resultant of the system   $  \{p_k(Ax)-z_k:k=1,\dots,n+1\}$.
    
  }

	\subsection{The morphism $g$ is birational }\label{2.1}
			
           {\color{black}
                
            In this subsection, we prove the following theorem, which implies that one can obtain the unique and desired vector $\xi^*$ for a generic matrix $A^*$ and a permutation $y^*$ of a generic vector in the column space of $A^*$ by solving the polynomial system $Q_{m}$.

             \begin{theorem}\label{thm:2} For $m>n>0$,
				the morphism
				\begin{align*}
					g:\CC^{m\times n}\times\CC^n&\rightarrow 
     \CC^{m\times n}\times\CC^{m} 
     \\
					(A^*,\xi^*)&\mapsto(A^*,p_1(A^*\xi^*),\dots, p_{m}(A^*\xi^*))\notag
				\end{align*}
				is birational onto the image $g(\CC^{m\times n}\times\CC^n)$. 		
			\end{theorem}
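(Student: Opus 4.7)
The plan is to establish birationality of $g$ by proving that $g$ is generically injective onto its image. Since $g$ is dominant onto its image by construction and both the source $\CC^{m\times n}\times\CC^n$ and the image are irreducible, generic injectivity is enough. Fix a generic point $(A^*,\xi^*)\in\CC^{m\times n}\times\CC^n$ and analyze its fiber: a point $(A^*,\xi')$ lies in $g^{-1}(g(A^*,\xi^*))$ if and only if $p_k(A^*\xi')=p_k(A^*\xi^*)$ for all $k=1,\dots,m$. By Newton's identities, the first $m$ power sums of a vector $y\in\CC^m$ determine its elementary symmetric polynomials and hence the polynomial $\prod_{i=1}^m(T-y_i)$, so they determine the multiset of coordinates of $y$. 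Therefore $A^*\xi'$ must equal a permutation $P_\pi A^*\xi^*$ of $A^*\xi^*$ for some $\pi\in\Sigma_m$, and since $A^*$ has full column rank $n$, such a $\xi'$ exists and is uniquely determined by $\pi$ exactly when $P_\pi A^*\xi^*\in\mathrm{col}(A^*)$.

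Thus the cardinality of the fiber equals $\#\{\pi\in\Sigma_m:P_\pi A^*\xi^*\in\mathrm{col}(A^*)\}$, which always contains $\pi=\mathrm{id}$. The crux of the proof is therefore to show that for each fixed $\pi\in\Sigma_m\setminus\{\mathrm{id}\}$ the subvariety
\[
V_\pi \;:=\; \{(A,x)\in\CC^{m\times n}\times\CC^n : P_\pi Ax\in\mathrm{col}(A)\},
\]
which is Zariski-closed because it is cut out by the $(n+1)\times(n+1)$ minors of the augmented matrix $[A\mid P_\pi Ax]$, is a proper subvariety of $\CC^{m\times n}\times\CC^n$. Granting this, the complement $U:=\CC^{m\times n}\times\CC^n\setminus\bigcup_{\pi\neq\mathrm{id}}V_\pi$ is a dense open subset (since $\Sigma_m$ is finite) on which $g$ is injective, establishing birationality.

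The main obstacle is verifying that $V_\pi$ is proper for every $\pi\neq\mathrm{id}$. It suffices to exhibit a single $A_0\in\CC^{m\times n}$ whose column space $\mathrm{col}(A_0)$ is not $P_\pi$-invariant, because then the constraint $P_\pi A_0 x\in\mathrm{col}(A_0)$ is a nontrivial linear condition on $x$ and so restricts $x$ to a proper linear subspace of $\CC^n$. For $\pi\neq\mathrm{id}$, the permutation matrix $P_\pi$ is non-scalar and has at least two distinct eigenvalues (they are roots of unity), so its invariant subspaces are precisely the direct sums of subspaces of its eigenspaces. A standard dimension count on the Grassmannian $\mathrm{Gr}(n,m)$ then shows that the locus of $P_\pi$-invariant $n$-dimensional subspaces of $\CC^m$ forms a proper closed subvariety whenever $1\leq n<m$; in particular, for a generic choice of $A_0$ the subspace $\mathrm{col}(A_0)$ fails to be $P_\pi$-invariant, completing the argument.
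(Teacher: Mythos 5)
Your proposal is correct, but it is organized differently from the paper's argument. You work directly with the fibers of $g$: since the $A$-component is preserved, a fiber computation plus Newton's identities (power sums determine the multiset of coordinates) reduces everything to showing that, for each fixed $\pi\neq\mathrm{id}$, the locus $V_\pi=\{(A,x):P_\pi Ax\in\mathrm{col}(A)\}$ is proper, which you prove from scratch via invariant subspaces of the non-scalar operator $P_\pi$ on the Grassmannian; you then conclude birationality from the fact (valid in characteristic $0$) that a dominant morphism of irreducible varieties whose generic fibers are single points has function-field degree one. The paper instead factors $g=\beta\circ\alpha$ through the determinantal variety $D_{n+1}(A|y)$: $\alpha$ is birational by irreducibility of $D_{n+1}(A|y)$ and Cramer's rule, while $\beta$ is handled by exhibiting the explicit dense open set $W_1=D_{n+1}(A|y)\setminus\bigcup_{\sigma\neq 1}D_{n+1}(A|\sigma(y))$ (whose nonemptiness, i.e.\ your properness of $V_\pi$, is cited from the proof of Theorem 1 in \cite{Tsakiris2020} rather than proved), then showing $\beta^{-1}(\beta(W_1))=W_1$ via Vieta and that $\beta(W_1)$ is open using that $\beta$ is a finite, hence closed, morphism. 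Your route is more elementary and self-contained (no determinantal variety, no finite-morphism machinery, and an independent proof of the genericity fact), at the cost of invoking the characteristic-zero degree argument; the paper's factorization buys explicit saturated open sets ($W_1$, $\beta(W_1)$) on which $g$ restricts to a bijection, which the rest of the paper reuses (e.g.\ in Remark \ref{rmk3} and in identifying the irreducible components of $\mathcal{X}$). One small caution: your opening phrase ``generic injectivity is enough'' should be understood as ``generic fibers are singletons''---injectivity on a dense open subset of the source alone would not suffice---but since your computation determines the entire fiber over $g(A^*,\xi^*)$, the argument as carried out is sound.
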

    When $m=n+1$,  the morphism $g$ equals $f$, i.e., Theorem \ref{thm:2} implies  Theorem \ref{thm:1}  in this special case.}

   We decompose $g$ into the composition of two dominant morphisms $\alpha$ and $\beta$ defined by  (\ref{alpha}) and (\ref{beta}) respectively.  To show $g$ is birational, it suffices to show $\alpha$ and $\beta$ are birational.

   Define the morphism $\alpha$
			\begin{align}
				\alpha:\CC^{m\times n}\times\CC^n&\rightarrow D_{n+1}(A|y)\subseteq\CC^{m\times n}\times\CC^m, \label{alpha}\\
				(A^*,\xi^*)&\mapsto(A^*,A^*\xi^*)\notag
			\end{align}
   where {\color{black} $D_{n+1}(A|y)$ is the zero locus of the all $(n+1)$-minors of the matrix $(A|y)$, $A=[a_{ij}]$ is a $m$-by-$n$ matrix of $mn$ variables and $y=[y_i]$ is a column vector of $m$ variables, ${i\in[m],j\in[n]}$. Namely, $D_{n+1}(A|y)$ is the determinantal variety consisting of $m$-by-$(n+1)$ matrices of rank less than $n+1$. The determinantal variety $D_{n+1}(A|y)$ is irreducible \cite[Proposition 1.1]{Bruns&Vetter1988}, i.e., it is not a union of any two proper closed subvarieties.} 

    		\begin{lemma}\label{lem:1}
    			The morphism $\alpha$ is birational. 
    		\end{lemma}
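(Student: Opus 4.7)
The plan is to construct an explicit rational inverse to $\alpha$ on a dense open subset of $D_{n+1}(A|y)$. First I would verify that $\alpha$ does land in $D_{n+1}(A|y)$: for any $(A^*,\xi^*)$ the last column of $(A^*\mid A^*\xi^*)$ is a $\CC$-linear combination of the first $n$ columns, so $\rank(A^*\mid A^*\xi^*)\leqslant n$ and every $(n+1)$-minor vanishes. Both source and target are irreducible of the same dimension $mn+n$ (the source is an affine space, the target is irreducible by the cited Proposition 1.1 of Bruns--Vetter), so birationality of $\alpha$ follows once a morphism $\beta$ defined on some dense open $V\subseteq D_{n+1}(A|y)$ with $\alpha\circ\beta=\mathrm{id}_V$ is produced.

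The candidate for $V$ is the locus where $A^*$ has full column rank $n$. This is open in $D_{n+1}(A|y)$ (its complement is cut out by the $n$-minors of $A$), nonempty (it contains e.g.\ $(A^*,0)$ for any full-rank $A^*$), and hence dense by irreducibility. On $V$, the rank conditions $\rank(A^*\mid y^*)\leqslant n$ and $\rank(A^*)=n$ together force $y^*$ to lie in the column span of $A^*$, and then the injectivity of $A^*:\CC^n\to\CC^m$ yields a unique $\xi^*\in\CC^n$ with $A^*\xi^*=y^*$; set $\beta(A^*,y^*)=(A^*,\xi^*)$.

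Next I would check that $\beta$ is in fact a morphism. Cover $V$ by the open subsets $V_I$, indexed by $n$-subsets $I\subseteq[m]$, on which the $n\times n$ submatrix $A^*_I$ is invertible. On $V_I$ Cramer's rule gives $\xi^*=(A^*_I)^{-1}y^*_I$, whose entries are regular functions (denominator $\det A^*_I$ is nowhere zero on $V_I$). The local formulas agree on overlaps $V_I\cap V_J$ because the unique $\xi^*$ is characterized by $A^*\xi^*=y^*$, independent of the choice of $I$. These local definitions therefore glue to a morphism $\beta:V\to\CC^{m\times n}\times\CC^n$, and by construction $\alpha\circ\beta=\mathrm{id}_V$, which establishes birationality.

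The only real subtlety is identifying the right open set $V$ and checking it is dense in the determinantal variety rather than merely in the ambient affine space; once that is in hand, the rest is bookkeeping with Cramer's rule, and I do not anticipate any genuine obstacle.
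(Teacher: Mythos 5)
Your proposal is correct and follows essentially the same route as the paper: both arguments rest on the irreducibility of $D_{n+1}(A|y)$, take the full-column-rank locus (your $V$, the paper's $S_1$) as the dense open set, and invoke Cramer's rule to produce the unique preimage. The only difference is presentational—you spell out that the Cramer-rule inverse glues to an actual morphism on $V$, whereas the paper simply notes that the fibers over $S_1$ are singletons and concludes birationality from the resulting bijection $S_0\to S_1$.
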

    		
    		\begin{proof}
    			The determinantal variety $D_{n+1}(A|y)$ is irreducible \cite[(1.1) Proposition]{Bruns&Vetter1988}, and {\color{black}the morphism $\alpha$ maps the non-empty open subset $S_0$ of $\CC^{m\times n}\times\CC^n$ to the non-empty open subset $S_1=\alpha(S_0)$ in $D_{n+1}(A|y)$, where
                \begin{align}
                    S_0&:=\{(A^*,{\color{black}\xi^*})\in\CC^{m\times n}\times\CC^n:\rank(A^*)=n\}\label{solution_set}~\text{and}\\
                    S_1&:=\{(A^*,\eta^*)\in D_{n+1}(A|y):\rank(A^*)=n\} \label{sorted_samples}
                \end{align}
                are respectively the set of the solutions to USP and the set of correctly sorted sample points in USP. Hence, we deduce that $\alpha$ 
                is dominant. Furthermore, for any $(A^*,\eta^*)\in S_1$, using Cramer's rule in linear algebra, we see that the fiber $\alpha^{-1}(A^*,\eta^*)$ is a singleton, hence the morphism $\alpha$ is restricted to a bijection from $S_0$ to $S_1$. Therefore, $\alpha$ is birational.} 
    		\end{proof}
			
	
			The second dominant morphism, to be shown to be birational, is 
			\begin{align}
				\beta:D_{n+1}(A|y)&\rightarrow\overline{\beta(D_{n+1}(A|y))}\subseteq\CC^{m\times n}\times\CC^m \label{beta}\\
                (A^*,\eta^*)&\mapsto(A^*,p_1(\eta^*),\dots,p_m(\eta^*)) \notag
			\end{align}
            Note that {\color{black} $\beta$ is the restriction of the finite morphism 
            \begin{align}
                \CC^{m\times n}\times\CC^m&\rightarrow\CC^{m\times n}\times\CC^m,\label{symmetric_quotient}\\(A^*,\eta^*)&\mapsto(A^*,p_1(\eta^*),\dots,p_m(\eta^*))\notag
            \end{align}
            hence $\beta$ is also a finite and closed morphism \cite[Chapter II, Exercise 3.5]{Hartshorne_1977}, and 
            \begin{equation}
                \beta(D_{n+1}(A|y))=\overline{\beta(D_{n+1}(A|y))}=\overline{g(\CC^{m\times n}\times\CC^n)}.
            \end{equation}}
            {\color{black} To prove the following Lemma \ref{lem:2}, we shall first clarify the actions of $\Sigma_m$ on both $\CC^{m\times n}\times\CC^m$ and $\CC[A,y]$. 
   
            A permutation $\sigma\in\Sigma_m$ acts on $\CC[A,y]$ as a $\CC$-algebra isomorphism given by $\sigma(y_i)=y_{\sigma(i)}$ and fixing $A$. Then 
            for any $f\in\CC[A,y]$ and $y^*=(y_1^*,\dots,y_m^*)\in\CC^m$, we have
            $$(\sigma(f))(y^*)=f\left(y_{\sigma(1)}^*,\dots,y_{\sigma(m)}^*\right).$$
            A permutation $\sigma\in\Sigma_m$ acts on $y^*=(y_1^*,\dots,y_m^*)\in\CC^{m}$ by 
            \begin{equation}
            \sigma(y_1^*,\dots,y_m^*)=\left(y_{\sigma^{-1}(1)}^*,\dots,y_{\sigma^{-1}(m)}^*\right).\label{permute_Cm}
            \end{equation}
            Then 
            $(\sigma(f))(y^*)=f\left(\sigma^{-1}(y^*)\right)$. This action induces the fiberwise action of $\Sigma_m$ on $\CC^{m\times n}\times\CC^m$, i.e., for $(A^*,y^*)\in\CC^{m\times n}\times\CC^m$, we define
            \begin{equation}
                \sigma(A^*,y^*)=(A^*,\sigma(y^*)).\label{permute_fiberwise}
            \end{equation}

            For the variable vector $y=[y_1,\dots,y_m]$, 
            $$\sigma(y):=[y_{\sigma(1)},\dots,y_{\sigma(m)}]$$ 
            is also a vector of variables, and we denote by $D_{n+1}(A|\sigma(y))$ the determinantal variety on which all the $(n+1)$-minors of $(A|\sigma(y))$ vanish. Then we have
            $$D_{n+1}(A|\sigma(y))=\sigma(D_{n+1}(A|y)).$$
            }

	\begin{lemma}\label{lem:2}
		The morphism $\beta$ is birational.
	\end{lemma}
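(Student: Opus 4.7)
The plan is to verify that $\beta$ has generic fiber of cardinality one; since $\beta$ is already known to be a finite closed morphism onto its irreducible image $\overline{\beta(D_{n+1}(A|y))}$ and the source $D_{n+1}(A|y)$ is irreducible, this will yield the birational equivalence.

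First, I would describe the fibers of $\beta$ combinatorially. Because the power sums $p_1,\dots,p_m$ generate the ring of symmetric polynomials in $y_1,\dots,y_m$ over $\CC$, the map $y^*\mapsto(p_1(y^*),\dots,p_m(y^*))$ realizes the geometric quotient $\CC^m\to\CC^m/\Sigma_m$, so two vectors have the same power sums if and only if they differ by a coordinate permutation. Combined with the fact that $(A^*,z^*)\in D_{n+1}(A|y)$ with $\rank(A^*)=n$ iff $z^*\in\mathrm{col}(A^*)$, this gives
$$\beta^{-1}\bigl(\beta(A^*,y^*)\bigr)=\bigl\{(A^*,\sigma(y^*)):\sigma\in\Sigma_m,\ \sigma(y^*)\in\mathrm{col}(A^*)\bigr\}$$
for every $(A^*,y^*)\in D_{n+1}(A|y)$ with $A^*$ of rank $n$ and $y^*$ having pairwise distinct coordinates.

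Second, for each fixed $\sigma\in\Sigma_m\setminus\{\mathrm{id}\}$, I would show that
$$D_\sigma:=D_{n+1}(A|y)\cap\sigma^{-1}\bigl(D_{n+1}(A|y)\bigr)=D_{n+1}(A|y)\cap D_{n+1}(A|\sigma^{-1}(y))$$
is a proper closed subvariety of $D_{n+1}(A|y)$. Since both intersected varieties are irreducible of the same dimension, it suffices to exhibit a single point of $D_{n+1}(A|y)\setminus D_\sigma$. Writing $P_\sigma$ for the permutation matrix of $\sigma$ and parametrizing by $y^*=A^*\xi^*$, the condition $(A^*,\sigma(y^*))\in D_{n+1}(A|y)$ becomes $P_\sigma A^*\xi^*\in\mathrm{col}(A^*)$. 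For a generic $A^*$ of rank $n$, the column space $\mathrm{col}(A^*)$ is not $P_\sigma$-invariant, because the locus of $P_\sigma$-stable $n$-planes in $\CC^m$ is a proper subvariety of the Grassmannian $\mathrm{Gr}(n,m)$ whenever $P_\sigma\neq I$ (a non-scalar linear operator stabilizes only those $n$-planes compatible with its eigenspace decomposition). Hence $P_\sigma\cdot\mathrm{col}(A^*)\cap\mathrm{col}(A^*)$ is a proper subspace of $\mathrm{col}(A^*)$, so for generic $\xi^*$ we have $P_\sigma A^*\xi^*\notin\mathrm{col}(A^*)$, producing the required point.

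Finally, since $\Sigma_m\setminus\{\mathrm{id}\}$ is finite, the union $\bigcup_{\sigma\neq\mathrm{id}}D_\sigma$ is a proper closed subset of $D_{n+1}(A|y)$, and on its non-empty open complement the fiber $\beta^{-1}(\beta(A^*,y^*))$ reduces to the singleton $\{(A^*,y^*)\}$. This shows $\beta$ has degree one and therefore is birational. The main obstacle I anticipate is the genericity assertion in the second step, namely that a generic $n$-dimensional column space is not stable under a fixed non-trivial permutation matrix $P_\sigma$; I would close this either by the Grassmannian/invariant-subspace argument above, or by writing down an explicit $A^*$ adapted to the cycle structure of $\sigma$ (for instance, choosing the columns of $A^*$ whose supports are not invariant under the action of $\sigma$ on coordinates).
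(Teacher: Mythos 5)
Your proposal is correct and takes essentially the same route as the paper: you construct the same dense open locus (the complement in $D_{n+1}(A|y)$ of $\bigcup_{\sigma\neq\mathrm{id}}D_{n+1}(A|\sigma(y))$, the paper's $W_1$), use the fact that power sums determine a vector up to permutation (the paper invokes Vieta's theorem) to see that $\beta$ is injective there with saturated preimage, and conclude via finiteness/closedness of $\beta$. The only deviation is that where the paper cites the proof of Theorem 1 in \cite{Tsakiris2020} for the genericity fact that $\rank(A^*|\sigma(A^*\xi^*))=n+1$ for $\sigma\neq\mathrm{id}$, you prove it directly with the observation that a non-scalar permutation matrix leaves only a proper closed subvariety of $n$-planes in the Grassmannian invariant, which is a valid self-contained substitute.
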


    \begin{proof}
        To show that $\beta$ is birational, it suffices to prove the following three facts
\begin{itemize}
   \item 	The subset
				\begin{equation}
				W_1:=D_{n+1}(A|y)\setminus\bigcup_{\sigma\in\Sigma_m\setminus\{1\}}{D_{n+1}(A|\sigma(y))}\label{W_1}   
				\end{equation}
				is dense open in $D_{n+1}(A|y)$.

				By definition, $W_1$ is an open subset of  $D_{n+1}(A|y)$. According to \cite[Proof of Theorem 1]{Tsakiris2020},  for generic matrix $A^*\in\CC^{m\times n}$, generic column vector $\xi^*\in\CC^n$ and any	$m$-permutation $\sigma\neq1$, we have	
    \[{\color{black}\rank(A^*|\sigma(A^*\xi^*))=n+1.}\]
    Hence $W_1$ is nonempty and dense open in $D_{n+1}(A|y)$.

	\item {\color{black}$\beta^{-1}(\beta(W_1))=W_1$, and the restriction $\beta'$ of $\beta$ on $W_1$ is a bijection from $W_1$ to $\beta(W_1)$.

        We first notice that for any $(A^*,\eta^*)$, all the entries in $\eta^*$ are different from each other. Using Vieta's Theorem, we derive that for any $(A^*,\zeta^*)\in\beta^{-1}(\beta(W_1))$, there is a permutation $\sigma\in\Sigma_m$ such that 
        $$\sigma(\zeta^*)=\eta^*.$$
        Thus $\sigma=1$ and $(A^*,\zeta^*)=(A^*,\eta^*)\in W_1$. Moreover, we also deduce that $\beta'$ is injective.
        
        It is clear that $\beta'$ is surjective, so the bijectivity of $\beta'$ follows.} 

         \item {\color{black} $\beta(W_1)$ is dense open in $\beta(D_{n+1}(A|y))$.

         Vieta's Theorem implies that $\beta$ is a finite morphism, hence $\beta$ is a closed morphism. Since $W_1$ is open in $D_{n+1}(A|y)$ and $\beta^{-1}(\beta(W_1))=W_1$, we conclude that $\beta(W_1)$ is also open in $\beta(D_{n+1}(A|y))$.}

\end{itemize}
\end{proof}

			{\color{black}Now Theorem \ref{thm:2} follows, because $g$ is the composition of the birational morphisms $\alpha$ and $\beta$.}

			\begin{remark}\label{rmk3}
      
     In this remark, we construct explicitly the open subset $W$ of $\mathcal{X}$ such that if the sample point $(A^*,y^*)$ of USP lies in $W$, then there exists a unique $m$-permutation $\pi$ and a unique vector $\xi^*\in\CC^n$ such that $A^*\xi^*=\pi(y^*)$.

    \begin{itemize}
        
        {\color{black}        
        \item The set $S$ of sample points $(A^*,y^*)$ in USP is
    \begin{equation}\label{sample_set}
        S:=\bigcup_{\sigma\in\Sigma_m}{\sigma(S_1)}\subseteq\CC^{m\times n}\times\CC^m, 
    \end{equation}
    where $S_1$ is defined in (\ref{sorted_samples}), and the action of $\Sigma_m$ on $\CC^{m\times n}\times\CC^m$ is defined by (\ref{permute_Cm}) and (\ref{permute_fiberwise}). $S$ is a dense open subset of the $\Sigma_m$-equivariant closed subvariety
    \begin{equation}
        \mathcal{X}:=\bigcup_{\sigma\in\Sigma_m}{\sigma(D_{n+1}(A|y))}\subseteq\CC^{m\times n}\times\CC^m,\label{variety:X}
    \end{equation} 
    A key observation is that, the finite morphism in (\ref{symmetric_quotient}) parameterizes the $\Sigma_m$-orbits of $\mathcal{X}$.
}

    \item  {\color{black} Since $W_1$ in (\ref{W_1}) is nonempty, for any $\tau\in\Sigma_m$ the set
    $$\tau(W_1)=\tau(D_{n+1}(A|y))\setminus\bigcup_{\sigma\in\Sigma_m\setminus\{\tau\}}{\sigma(D_{n+1}(A|y))})$$
    is also nonempty and dense open in $\tau(D_{n+1}(A|y))$. Therefore, the variety $\mathcal{X}$} has exactly $\abs{\Sigma_m}=m!$ irreducible components {\color{black}$\sigma(D_{n+1}(A|y))$ for $\sigma\in\Sigma_m$.} 
    
   		\item 	Write $W_1$ in (\ref{W_1}), then the set
   				$${\color{black} W:=\bigcup_{\sigma\in\Sigma_m}{\sigma (W_1)},\label{W}}$$
   				is dense open in $\mathcal{X}$, hence $S\cap W\neq\varnothing$. Since any point in $W$ lies in a unique irreducible component of $\mathcal{X}$, we conclude that if the sample point $(A^*,y^*)$ of USP lies in $W$, then there exists a unique $m$-permutation $\pi$ and a unique vector $\xi^*\in\CC^n$ such that $A^*\xi^*=\pi(y^*)$. {\color{black}Therefore, the solution $(A^*,\xi^*)$ to USP is unique for most sample points, and in this case $(A^*,\xi^*)$ is exactly the solution of system $Q_m$ in (\ref{eqn:m}).}
   
    \end{itemize}
			\end{remark}

	\subsection{The morphism $\gamma$ is birational }\label{2.2}


    		To show the morphism $\gamma$ in (\ref{gamma}) is birational, we need the projection
				\begin{align}
				\delta:\overline{f(\CC^{m\times n}\times\CC^n)}&\rightarrow\CC^{m\times n}\times\CC^n. \label{deta} \\
					(A^*,p_1(\eta^*),\dots,p_{n+1}(\eta^*))&\mapsto(A^*,p_1(\eta^*),\dots,p_n(\eta^*))\label{delta_mapping}
				\end{align}
			Notice that $(A^*,p_1(\eta^*),\dots,p_{n+1}(\eta^*))$ is a point in $f(\CC^{m\times n}\times\CC^n)$. 
    {Hence, $\delta$ is uniquely determined by (\ref{delta_mapping})}.
    
    {\color{black}Now we give the sketch of the proof that the morphism $\gamma$ is birational.}
    \begin{itemize}
        \item 
  
				For generic $(A^*,w^*)\in\CC^{m\times n}\times\CC^n$, the fiber $(\delta\gamma g)^{-1}(A^*,w^*)$ consists of at most $n!$ points \cite[Theorem 2]{Tsakiris2020} . Since $g$ is birational, we deduce that the fiber $(\delta\gamma)^{-1}(A^*,w^*)$ also generically consists of at most $n!$ points. Hence, it suffices to show that $\delta$ is also a generically finite morphism with generic covering degree $n!$. 
				
	\item			Let $A=[a_{ij}]_{m\times n},t=[t_1,\dots,t_n]$ and $z=[z_1,\dots,z_{n+1}]$ be the matrix and vectors in variables $a_{ij}$, $t_i, z_j$ respectively. 
  {\color{black} Let $R(A,z)\in\ZZ[A,z]$ and $c(A)\in\ZZ[A]$ be the resultants of the polynomial systems
		\begin{gather}
		    \{p_k(At)-z_k:k=1,\dots,n+1\},\label{eqnh:n+1}\\
                \{p_k(At):k=1,\dots,n\}\label{eqnh:n}
		\end{gather}
				eliminating the variables $t_1, \ldots, t_n$, respectively. We shall show that  
    
    \begin{enumerate}
        
        \item\label{property1} The leading term of $R(A,z)$ in $z_{n+1}$ is $c(A)^{n+1}z_{n+1}^{n!}$;

        \item\label{property2} $R(A,p_1(Ax),\dots,p_{n+1}(Ax))=0$;

        \item\label{property3} $R(A,z)$ is irreducible in $\CC(A,z_1,\dots,z_n)[z_{n+1}]$.
    \end{enumerate}
    
    \begin{itemize}
        
        \item To show (\ref{property1}), we use Macaulay's assertions on resultants \cite[Section 8]{Macaulay_1916} to deduce that $\deg_{z_{n+1}}(R(A,z))\leqslant n!$ and the coefficient of $z_{n+1}^{n!}$ in $R(A,z)$ is $c(A)^{n+1}$. According to Lemma 4 in  \cite{Tsakiris2020},  $p_1(A^*t),\dots,p_n(A^*t)$ is a homogeneous regular sequence of $\CC[t]$ for generic matrix $A^*\in\CC^{m\times n}$, hence $p_1(At),\dots,p_n(At)$ is also a homogeneous $\CC(A)[t]$-regular sequence. Thus, the system (\ref{eqnh:n}) in variables $t$ has no solution in the projective $(n-1)$-space over the algebraic closure of $\CC(A)$. Therefore, \cite[Section 10]{Macaulay_1916} asserts that $c(A)$ is nonzero in $\CC[A]$, whence the leading term of $R(A,z)$ in $z_{n+1}$ is $c(A)^{n+1}z_{n+1}^{n!}$.
    
    Moreover, the fact that $p_1(At),\dots,p_n(At)$ is a homogeneous $\CC(A)[t]$-regular sequence implies that the field extension degree $$[\CC(A,x):\CC(A,p_1(Ax),\dots,p_{n}(Ax))]=n!$$
    is the product of the degrees of $p_k(At),k\in[n]$  in $t$. This fact also implies that for generic $(A^*,w^*)\in\CC^{m\times n}\times\CC^n$, the fiber $(\delta\gamma)^{-1}(A^*,w^*)$ consists of $n!$ points.  

    \item To show (\ref{property2}), we notice that the system (\ref{eqnh:n+1}) substituting $z_k$ with $p_k(Ax)$ has a solution $t=x$ in the affine space over the algebraic closure of $\CC(A,p_1(Ax),\dots,p_{n+1}(Ax))$. According to  arguments in  \cite[Section 10]{Macaulay_1916}, we  know that
    $$R(A,p_1(Ax),\dots,p_{n+1}(Ax))=0.$$
    
    \item To show (\ref{property3}), let $\overline{A}$ be the first $n+1$ rows of $A$, and $A_{>n+1}$ be the last $m-(n+1)$ rows of $A$. Then 
    \begin{enumerate}
        \item[(a)] $R\left(\overline{A},z\right)\equiv R(A,z)\mod A_{>n+1}$, and $c\left(\overline{A}\right)\neq0$;

        \item[(b)] The leading term of $R\left(\overline{A},z\right)$ in $z_{n+1}$ is $c\left(\overline{A}\right)^{n+1}z_{n+1}^{n!}$;

        \item[(c)] $R\left(\overline{A},p_1\left(\overline{A}x\right),\dots,p_{n+1}\left(\overline{A}x\right)\right)=0$;

        \item[(d)] $\left[\CC\left(\overline{A},x\right):\CC\left(\overline{A},p_1\left(\overline{A}x\right),\dots,p_n\left(\overline{A}x\right)\right)\right]=n!$.
    \end{enumerate}
         
    Note that the birational morphism $g$ for the case $m=n+1$ implies that $$\CC\left(\overline{A},p_1\left(\overline{A}x\right),\dots,p_{n+1}\left(\overline{A}x\right)\right)=\CC\left(\overline{A},x\right),$$ 
    hence the polynomial $R\left(\overline{A},p_1\left(\overline{A}x\right),\dots,p_n\left(\overline{A}x\right),z_{n+1}\right)$ in $z_{n+1}$ is irreducible over $\CC\left(\overline{A},p_1\left(\overline{A}x\right),\dots,p_n\left(\overline{A}x\right)\right)$. Since $p_1\left(\overline{A}x\right),\dots,p_n\left(\overline{A}x\right)$ is $\CC\left(\overline{A}\right)[x]$-regular, $R\left(\overline{A},z\right)$ in $z_{n+1}$ is also irreducible over $\CC\left(\overline{A},z_1,\dots,z_n\right)$. 
    Using Gauss' Lemma, we see that any factorization $R(A,z)=uv$ in $\CC(A,z_1,\dots,z_n)[z_{n+1}]$ induces a factorization $R(A,z)=u_1v_1$ in $\CC[A,z]$ satisfying $$\deg_{z_{n+1}}u_1=\deg_{z_{n+1}}u,~\deg_{z_{n+1}}v_1=\deg_{z_{n+1}}v$$ 
    and the leading coefficients of $u_1$ and $v_1$ in $z_{n+1}$ ly in $\CC[A]$. Modulo $A_{>n+1}$ and noticing that $c\left(\overline{A}\right)^{n+1}\neq0$ is the leading coefficient of $R\left(\overline{A},z\right)$, we deduce that $R\left(\overline{A},z\right)=\overline{u_1}\overline{v_1}$, $\deg_{z_{n+1}}\overline{u_1}=\deg_{z_{n+1}}u_1$ and $\deg_{z_{n+1}}\overline{v_1}=\deg_{z_{n+1}}v_1$. Thus, $R(A,z)$ is irreducible over $\CC(A,z_1,\dots,z_n)$.
    \end{itemize}
}
    
    Hence, {\color{black}the algebraic field extension degree $$[\CC(A,p_1(Ax),\dots,p_{n+1}(Ax)):\CC(A,p_1(Ax),\dots,p_{n}(Ax))]=n!$$ 
    coincides with 
    $$[\CC(A,x):\CC(A,p_1(Ax),\dots,p_{n}(Ax))]=n!,$$
    which implies that} $\delta$ is a generically finite morphism with a generic covering degree $n!$, {\color{black} and
    $$\CC(A,p_1(Ax),\dots,p_{n+1}(Ax))=\CC(A,x),$$ 
    namely,} $\gamma$ is birational, and  Theorem \ref{thm:1} follows. 
           We refer to \cite{Liang&Lu&Tsakiris&Zhi2023} for detailed proof of the above arguments. 
   
      \end{itemize}	
        
      \begin{remark}
					From the fact that $\gamma$ is birational, we can also derive that for generic $(A^*,w^*)\in\CC^{m\times n}\times\CC^n$, the fiber $(\delta\gamma g)^{-1}(A^*,w^*)$ exactly consists of $n!$ points, among which there is, however, only one solution to USP. This result indicates that the bound $n!$ in \cite[Theorem 2]{Tsakiris2020} is optimal.
				\end{remark}

\section{Rank-one moment matrix completion}\label{sec:moment}

There are many well-known symbolic or numeric algorithms and software packages for solving zero-dimensional polynomial systems in variables $ x=[x_1, \ldots,x_n]$
\begin{equation}\label{ps}
 \{g_1(x)=0, \ldots, g_{m}(x)=0\},  
\end{equation}
e.g. \cite{F4, berthomieu2021msolve, verschelde1999algorithm,Bertini,LLT08,reid2009solving}.   A desirable method for us is the semidefinite relaxation method proposed by Parrilo \cite{parrilo2000structured,parrilo2003semidefinite}, Lasserre \cite{lasserre2001global}
for solving a  sequence of  SDP problems with constant objective function $1$
  \begin{equation}\label{sdp0}
  \left\{
   \begin{array}{cl}
 \osty \min  & 1 \\
 \text{s.\ t.}  &  \yy_0 = 1,\\
                  & M_t(\sf{y})\succeq 0,\\
                  &  M_{t-d_j}(g_j\yy)= 0, \quad {j=1,\ldots,m,}\\
\end{array}\right.\quad
\end{equation}
where $d_j:=\lceil \deg(g_j)/2\rceil$,  $\yy={(\yy_{\alpha})}_{\alpha \in \NN^n}\in \RR^ {\NN^n}$, and  $M_t(\yy)$ {\color{black} { is the truncated moment matrix of order $t$}
\[M_t(\yy):=(\yy_{\alpha+\beta})_{|\alpha|,|\beta|\leqslant t }\in \RR^ {\NN^n\times \NN^n},\]
$|\alpha|=\sum_i \alpha_i$, $|\beta|=\sum_i \beta_i, $  which is the 
 principal submatrix of the full (infinite) moment matrix 
\[M(\yy)=(\yy_{\alpha+\beta}) \in \RR^ {\NN^n\times \NN^n}, ~ \alpha, \beta  \in \NN^n.\]
}
Suppose $g_j(x)=\Sigma_{\alpha\in\NN^n}g_{j,\alpha}
x^{\alpha}\in\RR[x]$. If the $(k,l)$-th entry of  $M_{t}(\yy)$ is
$y_{\beta}$, then the $(k,l)$-th entry of the 
localizing matrix   $M_{t-d_j}(g_j\yy)$ with respect to $\yy$ and $g_j$ is defined by
\begin{align*}
    M_{t-d_j}(g_j\yy)(k,l):=\sum_{\alpha}{g_{j,\alpha}\yy_{\alpha+\beta}}.
\end{align*}

Lasserre, Laurent, and Rostalski gave explicitly rank conditions that guarantee us to find all real solutions of the polynomial system (\ref{ps})  \cite{LLR08, LLR09a, LLR09b} by the semidefinite relaxation method. 
Let 
$d=\max_{j=1, \ldots, m} d_j,$
for $t \geqslant d$,
define the convex set 
\begin{equation}\label{kt}
    K_t^\RR:=\left\{\yy \in \RR^{\NN_{2t}^n} | \yy_0=1, M_t(\yy)\succeq 0, M_{t-d_j}(g_j\yy)=0, j=1,\ldots, m\right\}
\end{equation}
\begin{equation}\label{k}
    K^\RR:=\left\{\yy \in \RR^{\NN^n} | \yy_0=1, M(\yy)\succeq 0, M(g_j\yy)=0, j=1,\ldots, m\right\}
\end{equation}
We introduce the following notations: $T^n:=\{x^\alpha|\alpha\in \NN^n\}$, $T^n_{t}:=\{x^\alpha \in T^n||\alpha|\leq t\}$.
\begin{proposition}\label{prop:sol is opt}
  Let   $I = \langle g_1, \dots, g_m\rangle$ be an ideal in  $\RR[x], V_\RR(I)\neq \O$. If $v \in \RR^n$ is a  real  root of $I$, i.e. $v \in V_\RR(I)$, then for  $t\geq d$, $$\yy^{\delta_v}_{t}=(v^{\alpha })_{\alpha\in T^{n}_{2t}}$$
    is an optimal solution for the optimization problem:
      \begin{equation}\label{sdp*}
  \left\{
   \begin{array}{cl}
 \osty \min  & \rank(M_t(\yy)) \\
 \text{s.\ t.}  &  \yy_0 = 1,\\
                  & M_t(\sf{y})\succeq 0,\\
                  &  M_{t-d_j}(g_j\yy)= 0, \quad {j=1,\ldots,m,}\\
\end{array}\right.\quad
\end{equation}
    
\end{proposition}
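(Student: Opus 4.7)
The plan is to verify two things for $\yy^{\delta_v}_{t}$: that it lies in the feasible set $K_t^\RR$ of the SDP, and that it attains the minimum possible value of $\rank(M_t(\yy))$ over this feasible set. Once both are established the proposition follows immediately.

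For feasibility, I would check the three defining constraints of $K_t^\RR$ in (\ref{kt}) one by one. The normalization $\yy^{\delta_v}_t\,{}_{,0}=v^{0}=1$ is immediate. The crucial observation is that the moment matrix associated to a Dirac mass has a tensor-product factorization: the $(\alpha,\beta)$-entry of $M_t(\yy^{\delta_v}_t)$ equals $v^{\alpha+\beta}=v^{\alpha}\cdot v^{\beta}$, so
\[
M_t(\yy^{\delta_v}_t)=w\,w^{T},\qquad w=(v^{\alpha})_{\alpha\in T^n_{t}},
\]
which is clearly positive semidefinite and of rank exactly $1$ (since $w_0=1\neq 0$). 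For the localizing constraints, writing $g_j(x)=\sum_\gamma g_{j,\gamma}x^\gamma$, the $(\alpha,\beta)$-entry of $M_{t-d_j}(g_j\yy^{\delta_v}_t)$ is
\[
\sum_{\gamma}g_{j,\gamma}\,v^{\alpha+\beta+\gamma}=v^{\alpha+\beta}\sum_{\gamma}g_{j,\gamma}v^{\gamma}=v^{\alpha+\beta}\,g_j(v)=0,
\]
where the last equality uses $v\in V_\RR(I)$. Hence $M_{t-d_j}(g_j\yy^{\delta_v}_t)=0$ and $\yy^{\delta_v}_t\in K_t^\RR$.

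For optimality, I would argue as follows. Any $\yy\in K_t^\RR$ has $\yy_0=1$, so the $(0,0)$-entry of $M_t(\yy)$ equals $1$; being a nonzero positive semidefinite matrix it has rank at least $1$. On the other hand, the computation above shows $\rank(M_t(\yy^{\delta_v}_t))=1$. Therefore $\yy^{\delta_v}_t$ attains the minimum rank over the feasible set and is an optimal solution of (\ref{sdp*}).

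There is no real obstacle here; the argument is a direct verification. The only point worth emphasizing is the factorization $M_t(\yy^{\delta_v}_t)=ww^{T}$, which simultaneously delivers positive semidefiniteness, the rank-one bound, and (via $g_j(v)=0$) the vanishing of every localizing matrix. This is precisely why evaluating at a single real root of $I$ automatically produces a rank-one feasible moment sequence, which is the structural fact the subsequent rank-one moment matrix completion approach will exploit.
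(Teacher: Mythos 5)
Your proposal is correct and follows essentially the same route as the paper: feasibility via the rank-one factorization $M_t(\yy^{\delta_v}_t)=\zeta_{v,t}\zeta_{v,t}^{\top}$ together with $g_j(v)=0$, and optimality from $\yy_0=1$ forcing $\rank(M_t(\yy))\geq 1$ for every feasible point. The only cosmetic difference is that you verify the localizing constraints by a direct entrywise computation ($v^{\alpha+\beta}g_j(v)=0$), whereas the paper passes through the quadratic-form/kernel argument $vec(g_j)^{\top}M_t(\yy^{\delta_v}_t)vec(g_j)=g_j(v)^2=0$; both are equally valid.
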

\begin{proof}
    Since $\yy^{\delta_v}_{t}$ comes from a Dirac measure $\delta_v$, by definition $(\yy^{\delta_v}_{t})_{0} = v^0 =1$ and $$M_t(\yy^{\delta_v}_{t})=\zeta_{v,t}\zeta_{v,t}^{\top}\succeq 0,$$ where $\zeta_{v,t}=(v^\alpha)_{\alpha\in T_t^{n}}$ is a column vector. For $g_j(x)=\sum_{\alpha \in T^n} g_{j,\alpha}x^\alpha$, define $vec(g_j) = (g_{j,\alpha})_{\alpha}$ Therefore $vec(g_j)^{\top}M_t(\yy^{\delta_v}_{t})vec(g_j)=g_j(v)^2 = 0$, combining $M_t(\yy^{\delta_v}_{t})\succeq 0$, it follows that
    \begin{equation}
        M_t(\yy^{\delta_v}_{t})vec(g_j) = 0, vec(g_j) \in \ker M_t(\yy^{\delta_v}_{t}),M_{t-d_j}(g_j\yy)=0.
    \end{equation}
    We have shown that $\yy^{\delta_v}_{t}\in K_t^\RR$ is a feasible solution of (\ref{sdp*}) and $\rank M_t(\yy^{\delta_v}_{t}) =1$. For 
    $\forall \yy \in K_t^\RR, \yy_0=1\Rightarrow \rank(M_t(\yy))\geq 1$, so the optimal value of (\ref{sdp*}) is $1$, which can be achieved at $\yy^{\delta_v}_{t}$.
\end{proof}

\begin{proposition}
   Let $I = \langle g_1, \dots, g_m\rangle$ be an ideal in $\RR[x], V_\RR(I)\neq \O$. For $t\geq d$, suppose $\hat{\yy}_t\in K_t^\RR$ is an optimal solution of the optimization problem (\ref{sdp*}).
    Then there  $\exists v \in V_\RR(I)$, s.t. $\hat{\yy}_t = \yy^{\delta_v}_{t}$.
\end{proposition}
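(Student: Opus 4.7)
The plan is to start from Proposition~\ref{prop:sol is opt}, which already shows that the optimal value of (\ref{sdp*}) is $1$, and hence that any optimizer $\hat{\yy}_t$ satisfies $\rank(M_t(\hat{\yy}_t))=1$. Combined with $M_t(\hat{\yy}_t)\succeq 0$, this yields a factorization $M_t(\hat{\yy}_t)=uu^{\top}$ for some column vector $u=(u_\alpha)_{\alpha\in T^n_t}$. The normalization $(\hat{\yy}_t)_0=1$ forces $u_0^2=1$; after replacing $u$ by $-u$ if necessary, one may assume $u_0=1$.

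I would then define the candidate point $v:=(u_{e_1},\ldots,u_{e_n})\in\RR^n$ and prove by induction on $|\alpha|$ that $u_\alpha=v^\alpha$ for every $|\alpha|\leqslant t$. The key identity, immediate from $M_t(\hat{\yy}_t)=uu^{\top}$, is
\begin{equation*}
(\hat{\yy}_t)_{\alpha+\beta}=u_\alpha u_\beta\qquad\text{for all }|\alpha|,|\beta|\leqslant t.
\end{equation*}
The base cases $|\alpha|\in\{0,1\}$ hold by definition of $v$; for $|\alpha|\geqslant 2$, decompose $\alpha=\alpha'+e_i$ with $|\alpha'|<|\alpha|\leqslant t$ and compare $u_\alpha u_0=u_{\alpha'}u_{e_i}$, which gives $u_\alpha=v^{\alpha'}v_i=v^\alpha$. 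For any $|\gamma|\leqslant 2t$, picking a splitting $\gamma=\alpha+\beta$ with $|\alpha|,|\beta|\leqslant t$ then yields $(\hat{\yy}_t)_\gamma=v^\alpha v^\beta=v^\gamma$, so $\hat{\yy}_t=\yy^{\delta_v}_t$.

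Finally, to confirm $v\in V_\RR(I)$, I would examine the $(0,0)$-entry of each localizing matrix $M_{t-d_j}(g_j\hat{\yy}_t)$. Since $t\geqslant d\geqslant d_j$, this entry is well defined, and the constraint $M_{t-d_j}(g_j\hat{\yy}_t)=0$ forces
\begin{equation*}
\sum_{\gamma}g_{j,\gamma}(\hat{\yy}_t)_\gamma=\sum_{\gamma}g_{j,\gamma}v^\gamma=g_j(v)=0
\end{equation*}
for every $j=1,\ldots,m$, so $v$ is a real root of $I$.

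The main obstacle I expect lies in the inductive reconstruction step: one must keep every exponent appearing in the decompositions within the admissible range $|\cdot|\leqslant t$ so that the rank-one identity $(\hat{\yy}_t)_{\alpha+\beta}=u_\alpha u_\beta$ remains applicable, and one should also verify that $\deg(g_j)\leqslant 2d_j\leqslant 2t$ so that all moments appearing in the localizing equation are present in $\hat{\yy}_t$. Once this bookkeeping is settled, the remainder is the standard passage from a truncated rank-one positive semidefinite moment sequence to the corresponding Dirac measure, together with the localizing equalities which enforce vanishing at the generators of $I$.
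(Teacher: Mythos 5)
Your proof is correct, but it follows a genuinely different route from the paper. You argue directly from the rank-one factorization: since the optimal value is $1$ (Proposition \ref{prop:sol is opt}) and $M_t(\hat{\yy}_t)\succeq 0$, you write $M_t(\hat{\yy}_t)=uu^{\top}$, normalize $u_0=1$, set $v=(u_{e_1},\dots,u_{e_n})$, and reconstruct $u_\alpha=v^\alpha$ by induction and then $(\hat{\yy}_t)_\gamma=v^\gamma$ for all $|\gamma|\leqslant 2t$ via a splitting $\gamma=\alpha+\beta$ with $|\alpha|,|\beta|\leqslant t$; membership $v\in V_\RR(I)$ is read off the $(0,0)$ entry of each localizing matrix, where the degree bound $\deg(g_j)\leqslant 2d_j\leqslant 2t$ guarantees all needed moments exist. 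The paper instead observes that $1=\rank M_0(\hat{\yy}_t)\leqslant\rank M_s(\hat{\yy}_t)\leqslant\rank M_t(\hat{\yy}_t)=1$ gives the flat extension condition, extends $\hat{\yy}_t$ uniquely to an infinite sequence $\tilde{\yy}\in K^\RR$ of rank one, invokes Theorem 3.3 of [LLR08] to identify $\tilde{\yy}$ with the moment sequence of a Dirac measure $\delta_v$ with $\ker M(\tilde{\yy})=I(\{v\})$, and then deduces $g_j(v)=0$ from $vec(g_j)\in\ker M(\tilde{\yy})$. Your argument is self-contained and elementary, needing neither the flat extension theorem nor the structure theory of infinite moment matrices; the paper's argument is shorter given the cited machinery and fits the flat-extension framework that the rest of the paper relies on (e.g., the rank-stabilization criterion monitored in the experiments). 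The only cosmetic caveat in your version is the implicit assumption $t\geqslant 1$ so that the indices $e_i$ appear in $M_t$, which holds whenever some $g_j$ is nonconstant and is vacuous otherwise.
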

\begin{proof}
    Because $V_\RR(I) \neq \O$, choose some $\xi_0\in V_\RR(I)$ and construct $y^{\delta_{\xi_0}}_{t}$. According to Proposition \ref{prop:sol is opt}, the optimal value of (\ref{sdp*}) is $1$ and can be achieved at $y^{\delta_{\xi_0}}_{t}$.
    Since $\hat{\yy}_t\in K_t^\RR$ is an optimal solution of (\ref{sdp*}), $\rank(M_t(\hat{\yy}_t))=1$. Notice that $M_0(\hat{\yy}_t)=(1)$, $$1=\rank(M_0(\hat{\yy}_t))\leq\rank(M_s(\hat{\yy}_t))\leq\rank(M_t(\hat{\yy}_t))=1, 0\leq s\leq t. $$  
    $M_t(\hat{\yy}_t)$ satisfies flat extension condition.  Hence $\hat{\yy}_t$ can be uniquely extended to $\tilde{\yy}\in K^\RR$, s.t. $\rank(M(\tilde{\yy}))=\rank(M_t(\hat{\yy}_t))=1$.
    Then Theorem 3.3 in \cite{LLR08} shows that there exists $v \in \RR^{n}$, $\yy = \yy^{\delta_v}=(v^\alpha)_{\alpha\in T^n}$, with $\ker(M(\tilde{\yy}))= I(\{v\})$. Then $M(g_j\tilde{\yy})=0\Rightarrow g_j\tilde{\yy}=M(\tilde{y})vec(g_j)=0$, i.e. $vec(g_j)\in \ker(M(\tilde{\yy}))$, so $g_j(v) =0, j=1,\dots m$, which implies that $v\in V_\RR(I)$.
    
\end{proof}
Combining the above results, we have proved that:
\begin{theorem}
    Suppose $V_\RR(I)\neq \O, t\geq d$ then the map 
    \begin{align*}
        V_\RR(I) &\to Opt(I):=\{\yy \in K_t^\RR~|~\yy \text{ is an optimal solution of (\ref{sdp*})}\}\\
        v &\mapsto \yy^{\delta_v}_t
    \end{align*}
    gives an $1:1$ correspondence between a real root in $V_\RR(I)$ and an optimal solution of (\ref{sdp*})(which implies that the rank of the moment matrix is $1$).
\end{theorem}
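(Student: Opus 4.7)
The plan is simply to assemble the two preceding propositions and supply the one missing ingredient, namely injectivity of the map $v \mapsto \yy^{\delta_v}_t$. Together, these three pieces yield the claimed bijection.

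Well-definedness of the map, together with the fact that every image has a rank-one moment matrix, is already established in Proposition~\ref{prop:sol is opt}: for $v \in V_\RR(I)$, the truncated Dirac moment vector $\yy^{\delta_v}_t = (v^\alpha)_{\alpha \in T^n_{2t}}$ satisfies $(\yy^{\delta_v}_t)_0 = 1$ and $M_t(\yy^{\delta_v}_t) = \zeta_{v,t}\zeta_{v,t}^{\top} \succeq 0$ with $\rank M_t(\yy^{\delta_v}_t) = 1$; the localizing constraints $M_{t-d_j}(g_j\yy^{\delta_v}_t) = 0$ hold because $g_j(v)=0$ forces $vec(g_j) \in \ker M_t(\yy^{\delta_v}_t)$. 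Since every feasible $\yy \in K_t^\RR$ has $\rank M_t(\yy) \geqslant 1$ (as $\yy_0 = 1$), the SDP optimum equals $1$ and is attained at $\yy^{\delta_v}_t$, so $\yy^{\delta_v}_t \in Opt(I)$.

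Surjectivity onto $Opt(I)$ is the content of the second preceding proposition: from $\rank M_t(\hat{\yy}_t) = 1$ combined with $\rank M_0(\hat{\yy}_t) = 1$ one reads off the flat extension condition $\rank M_s(\hat{\yy}_t) = 1$ for all $0 \leqslant s \leqslant t$; a Curto--Fialkow style extension together with Theorem~3.3 of \cite{LLR08} then yields $\hat{\yy}_t = \yy^{\delta_v}_t$ for a unique $v \in \RR^n$, and the localizing equalities $M(g_j\tilde{\yy}) = 0$ force $g_j(v) = 0$, placing $v \in V_\RR(I)$. Injectivity of $v \mapsto \yy^{\delta_v}_t$ is then immediate, since for $t \geqslant 1$ (the nondegenerate situation guaranteed whenever the $g_j$ are nonconstant) the entries indexed by weight-one multi-indices recover the coordinates $v_i = (\yy^{\delta_v}_t)_{e_i}$, whence $\yy^{\delta_v}_t = \yy^{\delta_w}_t$ forces $v = w$.

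There is no real obstacle here: the theorem is a clean packaging of the two propositions once the trivial injectivity check is noted. If anything subtle arises, it is ensuring that the flat extension argument invoked in the surjectivity step truly produces a \emph{unique} representing point $v$; this is handled cleanly in \cite{LLR08} and needs no modification.
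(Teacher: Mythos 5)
Your proposal is correct and follows the same route as the paper: the theorem is obtained by combining Proposition~\ref{prop:sol is opt} (which shows $\yy^{\delta_v}_t$ is feasible with rank-one moment matrix, hence optimal) with the second proposition (which shows every optimal solution arises as $\yy^{\delta_v}_t$ for some $v\in V_\RR(I)$), and the injectivity you add is the same trivial observation (the degree-one moments recover $v$) that the paper leaves implicit.
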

According to the above Theorem, to obtain the unique solution of $I$, one must find the truncated moment sequence $\yy\in K_d^{\RR}$ with rank $1$. 
Hence, finding the unique real solution of a polynomial system is reduced to solving the rank-one moment matrix completion problem. There are many well-known methods for solving low-rank matrix completion problems 
\cite{candes2012exact,5454406,5074571,recht2010guaranteed,4797640,vandereycken2013low,jain2013low,8759045, 7383723,cosse2021stable, henrion2020real, nie2014semidefinite,nie2023low, da2016finite}. 

The nuclear norm of a matrix is the sum of the singular values of the matrix. If the matrix is symmetric and positive semidefinite, then its nuclear norm is equal to the trace of the matrix. 
In \cite[Theorem 2.2]{recht2010guaranteed}, Recht, Fazel, and Parrilo have shown that the nuclear norm is the best convex lower approximation of the rank function over the set of matrices with spectral norm less than or equal to $1$. Therefore, the problem of finding  a rank-one moment matrix can be relaxed to  the following form: 
\begin{equation}\label{SDP}
\left\{
   \begin{array}{cl}
 \osty \min \ & \trace(M_t(\yy)) \\
 \text{s.\ t.} \ &  \yy_0 = 1,\\
                  & M_t(\yy)\succeq 0,\\
                  &  M_{t-d_j}(g_j\yy)= 0, \quad {j=1,\ldots,m}\\
\end{array}\right.
\end{equation}

In  \cite{7383723,cosse2021stable}, Cossc and Demanet also show that when the rank-one matrix completion problem has a unique solution,  the second-order semidefinite relaxation with minimization of the nuclear norm will be enough for finding the unique solution. 
In \cite{ma2012computing,yang2023verifyrealroots,yang2013verified}, we have shown that by solving the nuclear norm optimization problem (\ref{SDP}), one can find some real solutions (not all solutions) for polynomial systems more efficiently.

 According to Theorem \ref{thm:1}, the polynomial system $Q_{n+1}$ has a unique real solution.   Hence, in the next section,  instead of minimizing the constant objective function $1$ in (\ref{sdp0}), we solve the nuclear norm minimization problem (\ref{SDP}) to find the unique real solution of $Q_{n+1}$.

\section{Experiments}\label{sec:experiment}
\subsection{Algorithm Design}\label{design}
For simplicity, we assume that our data $A^{*}\in \RR^{m \times n}, y^{*} \in \RR ^{n}$ are generic in the sense of Theorem \ref{thm:1} {and \cite[Theorem 2]{Tsakiris2020}}.
This section presents two symbolic algorithms and one symbolic-numeric algorithm to solve the unlabeled sensing problem (\ref{problem}). 

A straightforward attempt to recover $\xi^{*}$ is to solve the polynomial system $Q_n$ using Groebner basis \cite{GB65,GB70,GB79}, which is formalized as the following algorithm.   The experiment \ref{exp:1} was conducted on Maple's symbolic computation software, and Maple automatically chose the monomial order, usually the reverse-graded lexicographic order.

\begin{algorithm}
\caption{Groebner basis for solving square system $Q_{n}$ (\ref{eqn:2})}\label{alg:symb_n}
\SetAlgoLined

\KwIn{$A^{*}\in \QQ^{m \times n}, y^{*} \in \QQ ^{n}$ such that $\exists \pi \in \Sigma_{m}$, the permutation group, and $\pi( y^*) \in \im(A^*)$, the image space of $A^{*}$}
\KwOut{$\xi^{*}$ such that $A^{*}\xi^{*} = \pi( y^{*})$ }
{compute Groebner basis of the ideal $I_n=(Q_n)$, and extracted the roots symbolically\newline
$roots \gets \{\xi\in \QQ^{n} \vert q_{i}(\xi) = 0, i=1,\dots, n\}$\;}
\For{$\xi \in roots$}
{\If{ coordinates of $A^{*}\xi $ are a permutation of $y^{*}$}
{\Return $\xi^{*} \gets \xi$}
}

\end{algorithm}
The complexity of solving zero-dimensional polynomial systems in $n$ variables is known to be single exponential in $n$ \cite{caniglia1988some, lakshman1991single, F4}, even for well-behaved cases \cite{FMV2016}. The complexity of examining whether coordinates of $A^{*}\xi $ are a permutation of $y^{*}$  is quasi-linear in $m$ using a quick sorting algorithm.

 According to Theorem \ref{thm:1},  the polynomial system $Q_{n+1}$ has only one solution. Experimental results in Section \ref{eval} show that it is far more efficient to compute the Groebner basis for polynomial systems with only one solution. This observation leads to the following algorithm \ref{alg:symb_extra}.

\begin{algorithm}
\caption{Groebner basis for solving overdetermined system $Q_{n+1}$ (\ref{eqn:3})}\label{alg:symb_extra}
\SetAlgoLined
\KwIn{$A^{*}\in \QQ^{m \times n}, y^{*} \in \QQ ^{n}$ such that $\exists \pi \in \Sigma_{m}$, the permutation group, and $\pi( y^*) \in \im(A^*)$, the image space of $A^{*}$}
\KwOut{$\xi^{*}$ such that $A^{*}\xi^{*} = \pi( y^{*})$ }
{compute Groebner basis of the ideal $I_{n+1} = (Q_{n+1})$, and extracted the roots symbolically\newline $roots \gets \{\xi\in \QQ^{n} \vert q_{i}(\xi) = 0, i=1,\dots, n+1\}$\;}
{$roots = \{\xi^{*}\}$\;}
{\Return $\xi^{*}$}
    
\end{algorithm}

Both algorithms above use symbolic computation. Given precise input and clean measurement, the solution one obtains using these two algorithms is exact. When the given data are corrupted by noise,  the perturbed polynomial system  $Q_{n+1}$ will have no solution as it is an overdetermined polynomial system.  However, in real-world applications, all measurements are inevitably influenced by noise. Therefore, a robust and efficient symbolic-numerical algorithm is needed to deal with corrupted measurements $y^{*}$ one observes in the unlabeled sensing problem.  In Section \ref{sec:moment}, we have shown that the unique solution of the polynomial system $Q_{n+1}$ can be obtained by solving the rank-one moment matrix completion problem (\ref{SDP}). Below, we develop the rank-one moment matrix completion method for solving polynomial system $Q_{n+1} $ (\ref{eqn:3}) numerically.

\begin{algorithm}
    \caption{Matrix completion solving overdetermined system $Q_{n+1}$ (\ref{eqn:3})}\label{alg:num}
\KwIn{$A^{*}\in \RR^{m \times n}, y^{*} \in \RR ^{n}$ such that $\exists \pi \in \Sigma_{m},\pi( y^{*}) \in im(A^{*})$}
\KwOut{$\xi^{*}$ such that $A^{*}\xi^{*} = \pi( y^{*})$ }
{\color{black} {solve SDP problem  (\ref{SDP}) for $g_i=q_i$ to the relaxation order $s=\lceil (n+1)/2\rceil$ to get the moment matrix  $M_s$

{compute singular value decomposition  of   $M_s$} 
{$[U,S,V]\gets svd(M_s)$\;}
{select the first n+1 rows of $U$ \newline}
{$x_1\gets U(1:n+1,1)$\;}
{normalize $x_1$ to recover $\xi_{sdp}$}
{$\xi_{sdp} \gets (\xi(k) \gets x_1(k+1,1)/x_1(1,1))_{k = 1,\dots, n}$\;}
{use EM method in \cite{Tsakiris2023} to refine $\xi_{sdp}$ \newline}
{$\xi_{EM} \gets EM(\xi_{sdp})$\;}
\Return $\xi^{*} \gets \xi_{EM}$}}

\end{algorithm}

Semidefinite programming problems can be solved using the interior-point method~\cite{boyd04convex}, whose complexity is bounded by a polynomial in the matrix size. There are well-developed software packages to solve SDP problems, for example, Sedumi~\cite{Sedumi99} and SDPNAL+~\cite{sun2019sdpnal}. 

The  Expectation-Maximization (EM) method proposed in \cite{Tsakiris2020} is an algebraic trick to refine the solution extracted from a numerical polynomial system solver. By sorting $A^{*}\xi_{sdp}$ and $y^*$, if the solution extracted is accurate enough, one expects that $A^{*}\xi_{sdp}$ and $y^*$ after sorting should match, hence recovers the permutation $\pi$ and reduces the problem to classical linear regression. The numerical precision of classical linear regression is determined by the condition number of $A^*$ and precision of $y^*$, which, in practice, can be better controlled in numerical algorithms.

\subsection{Symbolic and Numerical Experimental Results}\label{eval}
The experiments were conducted on a desktop with an Intel(R) Core(TM) i9-10900X CPU @ 3.70GHz and 128 GB RAM. The matrix entries of $A^{*}$ were randomly generated from standard Gaussian distribution in these experiments, and entries $\xi^{*}$ were generated from a uniform distribution in $(0,1)$. Equations $(q_{i}=0)_{i =1,\dots,n+1}$ were then constructed from $A^{*}$, $\xi^{*}$, and possible noise. SDP in algorithm \ref{alg:num} was solved using Sedumi~\cite{Sedumi99}. {\color{black}Source codes are uploaded on GitHub at \href{https://github.com/lujingyv/unlabeled-sensing-ISSAC-2024}{unlabeled-sensing-ISSAC-2024}.}

\subsubsection{symbolic algorithm \ref{alg:symb_n} and \ref{alg:symb_extra}, clean input data in $\QQ$, algorithm performance on different $n$}\label{exp:1}\hfill

Experiment \ref{exp:1} compares the running time of algorithm \ref{alg:symb_n} (TA1/s) and algorithm \ref{alg:symb_extra} (TA2/s) with different $n = 3,4\dots,8$, and $m$ was set as $2n$, clean measurement. Algorithm \ref{alg:symb_n} uses the square system $Q_n=\{q_1(x)=0, \ldots,q_{n}(x)=0\}$. Algorithm \ref{alg:symb_extra} uses the overdeterimined system $Q_{n+1}=\{q_1(x)=0, \ldots,q_n(x),q_{n+1}(x)=0\} $. All of the Groebner basis computations are done in Maple.

\begin{table}[!ht]
    \centering
    \caption{cpu time for Algorithm \ref{alg:symb_n} and Algorithm \ref{alg:symb_extra}, with different $n$, $m=2n$.}
    \begin{tabular}{|l|l|l|}
    \hline
        $n$ & $TA1/$s & $TA2/$s  \\ \hline
        3 & 0.05  & 0.08   \\ \hline
        4 & 1.95  & 0.24   \\ \hline
        5 & 153.05  & 0.56   \\ \hline
        6 & -- & 3.30   \\ \hline
        7 & -- & 200.39   \\ \hline
        8  & -- & 72549.63\\ \hline
    \end{tabular}
    \label{tab:1}
\end{table}

Table \ref{tab:1} shows that as the number of variables and the degree of the polynomial system increase, computing a Groebner basis for the polynomial system $Q_{n}$ or  $Q_{n+1}$ becomes more difficult. As $Q_{n+1}$ has only one solution, its Groebner basis in lexicographic order consists of $n$ linear polynomials $x_1-\xi^*_1, \ldots, x_n-\xi^*_n$. On the other hand, the polynomial system $Q_n$ has at most $n!$ solutions; its Groebner basis consists of a set of polynomials with higher degrees and larger coefficients. Therefore, solving the overdetermined polynomial system $Q_{n+1}$ is much faster than solving the square polynomial system $Q_n$. When $n\geqslant 6$, algorithm \ref{alg:symb_n} could not return the result within 24 hours and was terminated manually.

\subsubsection{numerical algorithm \ref{alg:num}, clean input data in $\RR$, algorithm performance on different $n$}\label{exp:2}\hfill

The following experiments examine the performance of numerical algorithm \ref{alg:num}. Since algorithm \ref{alg:num} computes the moment optimization problem numerically, the relative error of the solution extracted from the moment matrix is defined as
$ err_{sdp} := \frac{\lVert \xi^{*}-\xi_{sdp}\rVert_{2}}{\lVert \xi^{*}\rVert_{2}},$
where $\xi_{sdp}$ represents the solution extracted by semidefinite programming part of algorithm \ref{alg:num}.

Experiment \ref{exp:2} examines the running time of algorithm \ref{alg:num} (TA3/s) with different $n =3,4,5,6$, and $m$ was set as $2n$, $t$ records the relaxation order used in the semidefinite programming, $ranks$ records the rank sequence of the truncated moment matrix $M$, $size$ denotes the dimension of the truncated moment matrix for the given relaxation order. In this experiment, no noise was imposed on $y$.

\begin{table}[!ht]
    \centering
    \caption{CPU time, relative error of $\xi_{sdp}$, rank sequence and moment matrix size for Algorithm \ref{alg:num} with different $n$, $m =2n$.}
    \begin{tabular}{|l|l|l|l|l|l|}
    \hline
        $n$ & $t$ & $TA3/$s & $ err_{sdp}$ & $ranks$ & $size$ \\ \hline
        $3$ & $2$  & $0.20$  &  2.73E-08 &$1,1,1$ & $10$\\ \hline
        $4$ & $3$  & $1.88$  & 6.53E-07  &$1,1,1,1$ & $35$\\ \hline
        $5$ & $3$  & $5.58$  & 1.62E-06  &$1,1,1,1$ & $56$\\ \hline
        $6$ & $4$  & $290.95$  & 4.93E-06  &$1,1,1,1,1$ & $210$\\ \hline
        
    \end{tabular}
    \label{tab:2}
\end{table}

Table \ref{tab:2} shows that as the number of variables and the degree of polynomial rise, the time consumed to compute the moment matrix increases. The rank sequence of the moment matrix $M$ stabilizing at $1$ indicates that the semidefinite programming reaches the flat extension criterion at the lowest relaxation order required, which is $\lceil (n+1)/2\rceil$, therefore Theorem \ref{cor} guarantees that the solution extracted from the $M$ is exactly the unique solution of the polynomial system $Q_{n+1}$. When using clean measurement $y$, the $ err_{sdp}$ behaves well and mildly increases as $n$ becomes larger for $n =3,4,5,6$.

\subsubsection{numerical algorithm \ref{alg:num}, corrupted measurement $y$, algorithm performance on different $m$}\label{exp:3}
\hfill

The following experiments included noise on measurement. For real solution $\xi^{*}$, and clean 
 measurement $y^{*} := A^{*}\xi^{*}$, noise $y_{c}$ was assumed to be a Gaussian random vector with expectation $0$ and covariance matrix $\sigma^{2}I$, where $I$ denotes the $n\times n$ identity matrix. The magnitude of noise in signal processing is measured using signal-noise-ratio $SNR$ in decibels. In our experiments, $SNR$ can be computed from the formula
 {\color{black}\[SNR = 10\left(\log_{10}\left(\frac{n}{3\sigma^{2}}\right)\right).\]}
 
 The equations generated using corrupted measurement $y:=y^{*}+y_{c}$ as
 \[q_{ic}:= p_{i}(A^{*}x) - p_{i}(y)=0.\]
 We further report the relative error of the solution refined by EM algorithm~\cite{Tsakiris2020} as
$ err_{EM} := \frac{\lVert \xi^{*}-\xi_{EM}\rVert_{2}}{\lVert \xi^{*}\rVert_{2}}.$
{\color{black} EM method is essentially a sorting procedure. It sorts $y$ according to its coordinates to $y_{sort}$.  For possible solutions $(x_i)_i$, compute $Ax_i$ and sort its coordinations to $(Ax_i)_{sort}$. The most likely solution should be the nearest one to $y_{sort}$. Hence, the permutation is determined accordingly. Once the permutation is found, solving USP is reduced to solving  a linear system.}

Experiment \ref{exp:3} examines the running time and relative error of algorithm \ref{alg:num} with respect to $m$ under the noise of $SNR = 60$dB, fixing $n=4$.$T_{EM}$, $T_{sdp}$ record the CPU time of EM refinement and solving SDP problem in algorithm \ref{alg:num}, $TA3=T_{EM}+T_{sdp}$ is the total CPU time of algorithm \ref{alg:num}.

\begin{table}[!ht]
    \centering
    \caption{CPU time, and relative error of $\xi_{EM}$ and $\xi_{sdp}$ for algorithm 3 with different m, observation y is corrupted by Gaussian random noise of $SNR = 60$dB, $n =4$ fixed, $20$ trials median.}
     \begin{tabular}{|l|l|l|l|l|l|}
    \hline
        $m$ & $err_{EM}$ & $err_{sdp}$ & $T_{EM}/$s & $T_{sdp}/$s & $TA3/s$  \\ \hline
        500 & 0.010\%& 0.141\%& 0.003  & 2.477& 2.450\\ \hline
        1000 & 0.006\%& 0.102\%& 0.003  & 2.586& 2.589\\ \hline
        2000 & 0.005\%& 0.125\%& 0.016& 2.602& 2.617\\ \hline
        5000 & 0.006\%& 0.250\%& 0.188& 2.672& 2.859\\\hline
    \end{tabular}    
    \label{tab:3}
\end{table}

\begin{remark}

Algorithm \ref{alg:num} may fail to find a good approximate solution using SDP solvers and 
EM refinement possibly cannot recover the right permutation from the solution extracted from $\xi_{sdp}$, leading to the relative error of the solution returned by algorithm \ref{alg:num} diverging very far away from most of the trials. 
Here we manifest a typical outlier on the condition $n=4, m=500, SNR=60$dB, the median of relative errors of the trials are $err_{EM}=0.016\%$ and $err_{sdp} = 0.396\%$,
the outlier appeared in our experiment is $err_{EM}=165.957\% $ and $err_{sdp} = 167.938\%$. For small $n=4$ and noise $SNR=80$dB or  $SNR=100$dB, the proportion of the outliers is less than  $5\%$. 
The proportion of the outliers is between $5\%$ and $35\%$ for other $n$ and $SNR$, increasing along with the number of variables and noise. 

{\color{black}The outliers are mainly caused by noise interfering SDP algorithm.  The rank sequence of SDP relaxation without noise under the assumption of a unique solution should stabilize at $1$. However, in outliers, the numerical rank sequence increases beyond 1 under a certain numerical tolerance concerning the noise. Set $n=4, m=50, SNR =60$dB and tolerance $0.01$ for the numerical rank.  In $50$ trials, $2$ outliers were recorded; these two outliers have numerical rank sequence $1,3,5,5$, and $1, 2, 3, 3$.  Many factors may lead to the increase of the rank sequence.  We observed that the scale of the
coordinates of the real solution $\xi^*$ varied acutely in two outliers. 
The outlier with numerical rank sequence $1,3, 5,5$ corresponds to the real solution $\xi^* = (0.753,0.081, 0.326, 0.879
)^t$; the outlier with rank sequence $1,2,3,4$ corresponds to the real solution $\xi^* = (0.414, 0.515, 0.004, 0.624)^t$ . }
\end{remark}

Table \ref{tab:3} shows that the accuracy $err_{sdp}$ and $err_{EM}$  of algorithm \ref{alg:num} is stable with different $m$. As $m$ increases, the time $TA3$ consumed mildly rises because the EM refinement part of algorithm \ref{alg:num} needs to sort more entries. However, compared with the sorting procedure, the main computation in algorithm \ref{alg:num} occurs in solving the SDP problem, which is irrelevant to $m$. Therefore, $TA3$ increases rather slowly when $m$ becomes larger. Table \ref{tab:3} also reveals that for $m = 500$ or $1000$, the portion of time consumed on EM refinement is insignificant in the overall time consumed; hence, the overall time of algorithm \ref{alg:num} will be reported in further experiments, and the time of EM refinement will be omitted in tables below.

\subsubsection{numerical algorithm \ref{alg:num}, corrupted measurement $y$, algorithm performance on different magnitude of noise $SNR$, $n = 5, 6$}\label{exp:4}
\hfill

Experiment \ref{exp:4} examines for each $n$ the running time and relative error of algorithm \ref{alg:num} with respect to the magnitude of noise, fixing $m = 500$.

\begin{table}[!ht]
    \centering
    \caption{CPU time, and relative error of $\xi_{EM}$ and $\xi_{sdp}$ for algorithm 3 with different magnitude of noise, $n =3,m=500$ fixed, $20$ trials median.}
    \begin{tabular}{|l|l|l|l|}
    \hline
        $SNR/$dB & $TA3/$s & $ err_{sdp}$ & $ err_{EM}$  \\ \hline
       100 & 0.27  & 1.74E-05 & 1.74E-05  \\ \hline
       80 & 0.29  & 0.029\% & 0.016\%  \\ \hline
       60 & 0.31  & 0.266\% & 0.017\%  \\ \hline
       50 & 0.31  & 0.748\% & 0.058\%  \\ \hline
       40 & 0.31  & 4.508\% & 0.328\%  \\ \hline     
      30 & 0.35  & 10.417\% & 2.447\%  \\ \hline
        
    \end{tabular}
    \label{tab:4}
\end{table}

\begin{table}[!ht]
    \centering
    \caption{CPU time, and relative error of $\xi_{EM}$ and $\xi_{sdp}$ for algorithm 3 with different magnitude of noise, $n =4,m=500$ fixed, $20$ trials median.}
    \begin{tabular}{|l|l|l|l|}
    \hline
        $SNR/$dB & $TA3/$s & $ err_{sdp}$ & $ err_{EM}$  \\ \hline
        100 & 2.85  & 2.14E-05 & 2.14E-05  \\ \hline
         80 & 3.04  & 0.021\% & 0.019\%  \\ \hline
         60 & 2.85  & 0.280\% & 0.021\%  \\ \hline
        50 & 3.02  & 0.603\% & 0.057\%  \\ \hline
        40 & 2.74  & 3.916\% & 0.431\%  \\ \hline
        30 & 2.72  & 11.993\% & 5.170\%  \\ \hline      
    \end{tabular}
    \label{tab:5}
\end{table}

\begin{table}[!ht]
    \centering
    \caption{CPU time, and relative error of $\xi_{EM}$ and $\xi_{sdp}$ for algorithm 3 with different magnitude of noise, $n =5,m=500$ fixed, $20$ trials median.}
    \begin{tabular}{|l|l|l|l|}
    \hline
        $SNR/$dB & $TA3/$s & $ err_{sdp}$ & $ err_{EM}$  \\ \hline
        100 & 6.21 & 0.004\% & 4.17E-05  \\ \hline
         80 & 6.29  & 0.043\% & 0.018\%  \\ \hline
60 & 6.24  & 0.419\% & 0.021\%  \\ \hline
50 & 7.13  & 1.585\% & 0.090\%  \\ \hline
        
        40 & 7.18  & 6.777\% & 0.690\%  \\ \hline

    \end{tabular}
    \label{tab:6}
\end{table}
\end{comment}
\begin{table}[!ht]
    \centering
    \caption{CPU time, and relative error of $\xi_{EM}$ and $\xi_{sdp}$ for algorithm 3 with different magnitude of noise, $n =5,m=500$ fixed, $20$ trials median.}
    \begin{tabular}{|l|l|l|l|}
    \hline
        $SNR/$dB & $TA3/$s & $ err_{sdp}$ & $ err_{EM}$  \\ \hline
        100 & 5.46& 0.003\%& 0.003\%\\ \hline
         80 & 5.58& 0.043\% & 0.021\%\\ \hline
60 & 6.50& 0.220\%& 0.010\%\\ \hline
50 & 6.97& 1.430\%& 0.058\%\\\hline

    \end{tabular}
    \label{tab:6}
\end{table}
\begin{table}[!ht]
    \centering
    \caption{CPU time, and relative error of $\xi_{EM}$ and $\xi_{sdp}$ for algorithm 3 with different magnitude of noise, $n =6,m=500$ fixed, $20$ trials median.}
    \begin{tabular}{|l|l|l|l|}
    \hline
        $SNR/$dB & $TA3/$s & $ err_{sdp}$ & $ err_{EM}$  \\ \hline
        100 & 185.83& 0.002\%& 0.002\%\\ \hline
        80 & 184.59& 0.023\%& 0.019\%\\ \hline
        60 & 253.38& 0.546\%& 0.011\%\\ \hline
        50 & 220.52& 1.224\%& 0.044\%\\ \hline

    \end{tabular}
    \label{tab:7}
\end{table}

Tables 
\ref{tab:6}, \ref{tab:7} show that as the noise builds up, the $ err_{sdp}$ becomes larger, and EM refinement technique successfully improves the precision of the solution in all these experiments. For small noise $SNR = 100$dB or $80$dB, the $ err_{EM}$ is not sensitive to $n$. This is not so strange because the precision of $\xi_{EM}$ depends majorly on the sorting procedure. When the noise is relatively small, with high probability EM technique finds the right permutation $\pi$. Once the permutation $\pi$ is deduced correctly, the precision of classical linear regression only depends on the condition number of $A$ and the relative noise $\lVert\frac{y_c}{A^{*}\xi^{*}}\rVert_{2}$ imposed on the measurement. For general $n = 5,6$, the condition number of $A$ with random entries of standard normal distribution behaves well, and the magnitude of relative noise is less sensitive to $n$. Hence, the relative error after EM refinement $ err_{EM}$ behaves well in small noise circumstances.

\subsubsection{comparison between algorithm AIEM \cite{Tsakiris2020} and algorithm \ref{alg:num}}\label{exp:6}
\hfill
Algorithm AIEM was introduced in \cite{Tsakiris2020}. AIEM uses homotopy method \cite{morgan1987homo} to solve the polynomial system $Q_n$ (\ref{eqn:2}).  {\color{black}Homotopy continuation method solves polynomial systems by transforming the original system into a continuous path from an easily solvable system to the desired one. It introduces an auxiliary parameter $t$ and constructs a family of homotopy equations with $t$, which smoothly deforms from a known system to the target system to be solved.} The homotopy solver 
returns at most $n!$ roots, among which the real solution will be selected by  EM sorting procedure. The relative error for $\xi_{homo}$ the solution from homotopy solver after selection by sorting  is defined as:
$ err_{homo} := \frac{\lVert \xi^{*}-\xi_{homo}\rVert_{2}}{\lVert \xi^{*}\rVert_{2}}.$
\begin{table}[ht]
\begin{center}
    \caption{CPU time and relative error for AIEM and algorithm \ref{alg:num} with different $n$. $m=500,SNR = 80$dB, 20 trials median.}
    \begin{tabular}{|c|c|c|c|c|c|c|}
    \hline   \multicolumn{1}{|c|}{}& \multicolumn{3}{|c|}{AIEM(homotopy)}  & \multicolumn{3}{|c|}{algorithm \ref{alg:num}(SDP)}  \\
    \hline $n$ & $T_{AIEM}/$s & $err_{homo}$ & $err_{EM}$ & $TA3/$s & $err_{sdp}$ & $err_{EM}$ \\
        \hline
        3 & 0.12  & 0.043\% & 0.012\% & 0.29  & 0.029\% & 0.016\%  \\ \hline
        4 & 0.23  & 0.037\% & 0.016\% & 3.04  & 0.021\% & 0.019\%  \\ \hline
        5 & 0.79  & 0.040\% & 0.014\% & 5.58  & 0.043\% & 0.021\%  \\ \hline
        6 & 9.06  & 0.039\% & 0.019\% & 184.80  & 0.053\% & 0.011\%  \\ \hline
    \end{tabular}
    \label{tab:8}
    \end{center}
\end{table}

Table \ref{tab:8} shows that as $n$ increases, AIEM using the homotopy solver HOM4PS2~\cite{homotopy} is more efficient than algorithm  \ref{alg:num} using the SDP solver Sedumi. 
The SDP solver Sedumi used in algorithm \ref{alg:num} implemented the traditional interior-point method in its inner iterations, which is numerically more stable but is relatively slow compared to modern SDP solvers such as SDPNAL+. In \cite{Tsakiris2020}, the homotopy solver Bertini \cite{Bertini} was used to solve the polynomial system $Q_n$, for $n=6$ AIEM needs $2243$ seconds to return the solution. Our experiments suggest that different software and implementations can influence the time and accuracy of AIEM and algorithm \ref{alg:num}, which should be further examined and compared in the future.

{\color{black}
\subsubsection{Computational complexity analysis} \label{sec4.2.6}\hfill
\\

The complexity of solving a zero-dimensional  polynomial system using Groebner bases 
 is single exponential $d^{O(n)}$
\cite{lakshman1991complexity_Gb}, where $d$ is the degree of polynomials and 
$n$ is the number of variables. In the context of unlabeled sensing and algorithm
\ref{alg:symb_n}($d=n$), \ref{alg:symb_extra}($d =n+1$), the overall complexity of symbolic algorithm should be $n^{O(n)} = O(\exp(n\log n))$.

For general input, $\exists n!$ solutions for homotopy continuation algorithm, and 
EM procedure picks one among the $n!$ solutions \cite{Tsakiris2020}. Fixing the precision of the result, the complexity of the homotopy continuation algorithm is proportional to the number of solutions, providing a lower bound $\Omega(n!)$.

For the SDP relaxation algorithm \ref{alg:num}, the interior method costs $O(N\sqrt{M})$, where $N$ is the cost of solving Newton system in iterations, $M$ characterizes the scale of semi-definite constrains \cite{nemirovski2004interior}. Denote $m_s$ as the size of the moment matrix. In the context of unlabeled sensing and algorithm \ref{alg:num}, $N =O(m_s^6)$, $M = O(m_sn)$, and 
$m_s \leq \binom{n+d}{d}=\binom{2n+1}{n}$.

Fixing the precision of results, an upper bound of the complexity of algorithm \ref{alg:num} can be estimated as $O\left(\binom{2n+1}{n}^{6.5}\sqrt{n}\right)$. Comparing  the complexity of homotopy continuation
algorithm and algorithm \ref{alg:num}:
\begin{equation}
    \frac{cost(HC)}{cost(SDP)} = \Omega\left(\frac{n!}{\binom{2n+1}{n}^{6.5}\sqrt{n}}\right) \subseteq \Omega(\exp(n)),
\end{equation}
 we know  that in the perspective of computational complexity, algorithm \ref{alg:num}
 has an exponential speed-up over the homotopy continuation algorithm. 
 
 For the case $n=5$,  the homotopy continuation algorithm in Bertini took half an hour to find  120 solutions \cite{Tsakiris2020}, while the  SDP algorithm in Sedumi used only  $5.58$  seconds to find the unique solution.  In 
our experiment, we used HOM4PS2 to solve the polynomial system, and it took only $0.79$ seconds to find  120 solutions.  Different implementations seem to influence the actual runtime of the algorithms. For low-rank matrix completion problems, there are also other practical algorithms; future research should test other implementations.
 }

\section{Conclusion}\label{conclusion}
In this paper, we give a positive answer to the open problem \ref{openproblem}
by showing the polynomial system $Q_{n+1}$ consisting of the $n+1$ Newton polynomials has a unique solution. The main Theorem \ref{thm:1} is proved by considering the birationality of the algebraic morphisms defined among several algebraic varieties. Since $Q_{n+1}$ has a unique solution, the unlabeled sensing problem can be reduced to the classic rank-one moment matrix completion problem, and the unique solution can be recovered at the lowest relaxation order $\lceil \frac{n+1}{2}\rceil$.

Enlightened by the main Theorem \ref{thm:1} , we develop symbolic and numerical algorithms \ref{alg:symb_n}, \ref{alg:symb_extra} and \ref{alg:num}. Algorithm \ref{alg:symb_n} and \ref{alg:symb_extra} are based on the Groebner basis computation, and algorithm \ref{alg:num} is based on semidefinite programming.

We test and compare the performance of these algorithms for different $n,m$ and $SNR$. Our experiments show that for clean input without noise, algorithm \ref{alg:symb_extra} solving the overdetermined system $Q_{n+1}$ is faster than algorithm \ref{alg:symb_n}, \ref{alg:num} and the existing algorithm AIEM, as the Groebner basis of the ideal generated by polynomials in $Q_{n+1}$ has simple linear form for lexicographic monomial order and graded monomial order. For corrupted input data with noise, our experiment shows that algorithm \ref{alg:num} can return an approximate solution within several minutes.
 The rank sequence of the moment matrices in algorithm \ref{alg:num} stabilizes at $1$ within the numeric tolerance when using clean input data. However, we also notice that due to the degree of the polynomial system $Q_{n+1}$ is relatively high for the moment relaxation method, algorithm \ref{alg:num} may fail to recover the unique solution, which should be improved in the future.

\subsection*{Acknowledgements}\hfill

Lihong  Zhi is  supported by the National Key R$\&$D Program of China (2023YFA1009401) and the National Natural
Science Foundation of China (12071467). Manolis C. Tsakiris is supported by the National Key R$\&$D Program of China (2023YFA1009402). Jingyu Lu would like to thank the help of Tianshi Yu on the experiments. The authors acknowledge the support of the Institut Henri Poincaré (UAR 839 CNRS-Sorbonne Université) and LabEx CARMIN (ANR-10-LABX-59-01).

	\bibliographystyle{abbrv}
	\bibliography{LiangBook, LiangArticle, zhi,polysolvema,exp, Liangzu}

\end{document}